\documentclass[11pt]{amsart}
\usepackage{amsmath,amssymb,amsthm}
\usepackage{comment}

\renewcommand{\inf}{\operatorname*{inf\vphantom{\sup}}}
\newtheorem{theorem}{Theorem}
\newtheorem{proposition}[theorem]{Proposition}
\newtheorem{lemma}[theorem]{Lemma}
\theoremstyle{remark}
\newtheorem{remark}[theorem]{Remark}
\theoremstyle{definition}

\title{The Big Match under translation-invariant charges}
\author{Abraham Neyman}
\address{Institute of Mathematics and the Federmann Center for the Study of Rationality, The Hebrew University of Jerusalem, Edmund Safra Campus,
Jerusalem 91904, Israel}
\email{aneyman@mail.huji.ac.il}
\thanks{This research was supported in part by Israel Science Foundation
grant 992/23.}  
\date{\today}

\begin{document}

\begin{abstract}
We study finite two-player zero-sum stochastic games in which the
stream of expected stage payoffs is evaluated by a translation-invariant charge on the set of stages. Every patient valuation, that is, every diffuse charge that obeys the time value of money principle, is translation-invariant. By Mertens and Neyman (1981), every finite two-player zero-sum stochastic game has a value; in particular, each player has, for every $\varepsilon>0$, a strategy that is $\varepsilon$-optimal for all patient valuations simultaneously. We show that this robustness breaks down for translation-invariant charges, even when the charge is fixed in advance. In the Big Match with $0$--$1$ payoffs, for every charge, Player~2 can guarantee that Player~1's payoff is at most $1/2$, but cannot guarantee any smaller bound. Nevertheless, we construct a fixed translation-invariant charge such that, for every strategy of Player~1 and every $\delta>0$, Player~2 has a pure reply yielding payoff less than $\delta$. Consequently, the Big Match has no value under this charge. More generally, the supremum of the payoffs that Player~1 can guarantee ranges over $[0,1/2]$ as the
translation-invariant charge varies. This contrasts with finite Markov decision processes, where every stationary Blackwell optimal strategy is optimal for every translation-invariant charge.
\end{abstract}

\maketitle

\section{Introduction}

Flesch, Predtetchinski, Sudderth and Venel \cite{FPSV} study
infinite-horizon Markov decision processes (MDPs) in which the
decision maker evaluates a strategy by aggregating the stream
of expected stage rewards by means of a diffuse charge $\mu$
on the set $\mathbb N$ of stages. They construct a diffuse charge under which no optimal strategy exists, pure or randomized. By contrast, in every finite MDP the decision maker has a pure stationary strategy that is Blackwell optimal \cite{Bla62}; every such strategy is limiting-average optimal (see, e.g., \cite[Ch.~10]{Put94}) and is $\mu$-optimal for every patient valuation, that is, for every diffuse charge $\mu$ that obeys the time value of money principle \cite{Ney23}. Throughout the paper, all MDPs and stochastic games have finite state spaces and finite action sets at each state.

We focus on \emph{translation-invariant} charges, that is, charges satisfying $\mu(E+1)=\mu(E)$ for every $E\subseteq\mathbb N$. Translation-invariant charges are diffuse, and every patient valuation is translation-invariant: by the characterization in \cite{Ney23}, this follows from the fact that for every $E\subseteq\mathbb N$ and every $n$ the sets $E\cap\{1,\dots,n\}$ and $(E+1)\cap\{1,\dots,n\}$ differ in cardinality by at most $1$.
The converse fails: a translation-invariant charge can assign mass one to a set of density zero, and such a charge does not obey the time value of money principle. Nevertheless, Theorem~\ref{thm:mdp} shows that in an MDP every stationary Blackwell optimal strategy is optimal for every translation-invariant charge. It follows from uniform estimates on the total payoff over arbitrary windows of stages, which are known (see, e.g., \cite[Corollary~8.3]{BK78}).

By Mertens and Neyman~\cite{MN81}, every two-player zero-sum stochastic game has a value; in particular, each player has, for every $\varepsilon>0$, a strategy that is $\varepsilon$-optimal for all patient valuations simultaneously.

It is natural to ask whether these simultaneous guarantees extend
to all translation-invariant charges. A first test case is the
Big Match of Blackwell and Ferguson \cite{BF68} with $0$--$1$
payoffs, whose value is $1/2$.

In the Big Match, the stationary strategy of Player~2 that assigns
equal probabilities to $L$ and $R$ yields expected payoff $1/2$
at every stage against every strategy of Player~1. Conversely,
against every strategy of Player~2, Player~1 has a pure strategy
yielding expected payoff at least $1/2$ at every stage.
Consequently,
\[
 \inf_\tau\sup_\sigma \mu(\sigma,\tau)=\frac12
 \qquad\text{for every charge }\mu,
\]
where the infimum ranges over all strategies $\tau$ of Player~2, the
supremum ranges  over all strategies $\sigma$ of Player~1, and $\mu(\sigma,\tau)$ denotes the charge $\mu$ applied to the stream of expected stage payoffs under $(\sigma,\tau)$.

Proposition~\ref{prop:bm} shows that, in sharp contrast both with MDPs under translation-invariant charges and with stochastic games under patient valuations, Player~1 cannot guarantee a positive payoff simultaneously for all translation-invariant charges:
\[
 \sup_\sigma\inf_{\tau,\mu}\mu(\sigma,\tau)=0,
\]
where the supremum ranges over all strategies $\sigma$ of
Player~1 and the infimum ranges over all pairs $(\tau,\mu)$,
where $\tau$ is a strategy of Player~2 and $\mu$ is a
translation-invariant charge. This identity remains valid when the infimum is restricted to pairs of pure strategies of Player~2 and translation-invariant charges.

We strengthen this result by constructing a single
translation-invariant charge $\mu$ such that
\[
 \sup_\sigma\inf_\tau\mu(\sigma,\tau)=0.
\]
Here, too, the infimum may be restricted to pure strategies of
Player~2: for every strategy $\sigma$ of Player~1 and every
$\delta>0$, there is a pure strategy $\tau$ of Player~2 such that
$\mu(\sigma,\tau)<\delta$.

Thus, under this fixed charge, Player~1 cannot guarantee any
positive payoff, whereas Player~2 cannot guarantee a bound
smaller than $1/2$. Consequently, the Big Match has no value
under this translation-invariant charge.

More generally, Theorem~\ref{thm:bm-main} constructs
two translation-invariant charges $\rho$ and $\nu$,
concentrated on disjoint subsets of $\mathbb N$, such that
\[
 \sup_\sigma\inf_\tau
 \bigl(\alpha\rho+(1-\alpha)\nu\bigr)(\sigma,\tau)
 =
 \frac{\alpha}{1+\alpha},
 \qquad 0\leq\alpha\leq1.
\]
The same identity holds when the infimum is restricted
to pure strategies of Player~2. Thus, for every $v\in[0,1/2]$, there is a
translation-invariant charge under which the supremum
of the payoffs that Player~1 can guarantee is $v$.

\section{Charges and ultrafilters}
\label{bm:sec:charges-ultrafilters}

\paragraph{Charges and translation invariance.}
A \emph{charge} on $\mathbb N=\{1,2,\ldots\}$ is a finitely additive
probability $\mu:2^{\mathbb N}\to[0,1]$: $\mu(\mathbb N)=1$ and
$\mu(E\cup F)=\mu(E)+\mu(F)$ for disjoint $E,F\subseteq\mathbb N$.
For a bounded real sequence $g=(g_t)_{t\ge1}$, write
$\mu(g)=\int_{\mathbb N}g_t\,d\mu(t)$.
The integral is defined on simple functions by finite additivity and
extended by uniform approximation. It is positive and linear, satisfies
$\mu(\mathbf 1)=1$, and obeys
\[
 |\mu(g)-\mu(h)|\le \|g-h\|_\infty.
\]

A charge is \emph{diffuse} if it assigns mass zero to every finite set.
Then $\mu(g)=\ell$ whenever $g_t\to\ell$; more generally,
$\mu(g)=\mu(h)$ whenever $g_t-h_t\to0$.
It is \emph{translation-invariant} if
\[
 \mu(E+1)=\mu(E)\quad(E\subseteq\mathbb N),
 \qquad E+1=\{t+1:t\in E\}.
\]
Every translation-invariant charge is diffuse: its singleton masses are
equal, and finite additivity forces their common value to be zero.
Translation invariance is equivalently expressed as $\mu(Sg)=\mu(g)$
for every bounded $g$, where $(Sg)_t=g_{t+1}$.

\paragraph{Ultrafilters.}
An \emph{ultrafilter} on $\mathbb N$ is a family $\mathcal U$ of subsets of $\mathbb N$ that is closed under supersets and finite intersections, and contains exactly one of $E$ and $\mathbb N\setminus E$ for every
$E\subseteq\mathbb N$. If finitely many sets cover $\mathbb N$, at least one of them belongs to $\mathcal U$.

The associated set function
\[
 u_{\mathcal U}(E)=
 \begin{cases}
  1,&E\in\mathcal U,\\
  0,&E\notin\mathcal U
 \end{cases}
\]
is a $\{0,1\}$-valued charge, and every such charge arises this way. Thus an ultrafilter can be viewed as the family of full-mass sets of a $\{0,1\}$-valued charge.

A \emph{nonprincipal}, or \emph{free}, ultrafilter on
$\mathbb N$ is an ultrafilter that contains no finite set.
Nonprincipal ultrafilters exist by Zorn's lemma.
Membership in a nonprincipal ultrafilter is unchanged
by finite modifications.

The charge $u_{\mathcal U}$ is diffuse if and only if
$\mathcal U$ is nonprincipal. A diffuse charge need not
be translation-invariant. Indeed, no $\{0,1\}$-valued
probability charge on $\mathbb N$ is translation-invariant,
since translation invariance would assign mass $1/2$
to each of the sets of even and odd integers.

\paragraph{Ultrafilter limits.}
Fix a nonprincipal ultrafilter $\mathcal U$ on $\mathbb N$.
For every bounded real sequence $(x_n)$ there is a unique
real number $\ell$, denoted $\lim_{n\to\mathcal U}x_n$,
such that
\[
 \{n:|x_n-\ell|<\varepsilon\}\in\mathcal U
 \qquad\text{for every }\varepsilon>0.
\]
This limit equals $\int_{\mathbb N}x_n\,du_{\mathcal U}(n)$.
Consequently, ultrafilter limits preserve linear combinations and
inequalities, and agree with ordinary limits whenever these exist.
In particular,
\[
 \lim_{n\to\mathcal U}\mathbf 1_E(n)=u_{\mathcal U}(E),
 \qquad
 \liminf_{n\to\infty}x_n
 \le \lim_{n\to\mathcal U}x_n
 \le \limsup_{n\to\infty}x_n.
\]

If $(\lambda_n)$ is a sequence of charges, then
\[
 \lambda(E)=\lim_{n\to\mathcal U}\lambda_n(E)
 \qquad(E\subseteq\mathbb N)
\]
defines a charge, and for every fixed bounded sequence $g$,
\[
 \lambda(g)=\lim_{n\to\mathcal U}\lambda_n(g).
\]
The identity holds first for simple functions and then, by uniform
approximation, for all bounded $g$.
If every $\lambda_n$ is translation-invariant, so is $\lambda$.

\paragraph{Interval averages.}
Let $(J_n)$ be finite nonempty intervals in $\mathbb N$
with $|J_n|\to\infty$. Then
\[
 \mu_J(E)=\lim_{n\to\mathcal U}
             \frac{|E\cap J_n|}{|J_n|}
\]
defines a translation-invariant charge, since shifting $E$ changes
each fraction by at most $1/|J_n|$.

For every bounded $g$,
\[
 \mu_J(g)=\lim_{n\to\mathcal U}
             \frac{1}{|J_n|}\sum_{t\in J_n}g_t.
\]
Taking $J_n=\{1,\ldots,n\}$ gives a charge that agrees with the
ordinary Ces\`aro limit whenever it exists.
An arbitrary translation-invariant charge need not have this property:
a set of natural density zero that contains arbitrarily long intervals
can have mass one under a charge constructed from those intervals.

\paragraph{Iterated ultrafilter limits.}
The construction of a charge by an ultrafilter limit can be
applied repeatedly. If $(\lambda_{k,n})_{k,n\ge1}$ is a family
of charges and $\mathcal U_1$ and $\mathcal U_2$ are fixed nonprincipal ultrafilters,
then
\[
 \lambda(E)
   =\lim_{k\to\mathcal U_1}\lim_{n\to\mathcal U_2}
        \lambda_{k,n}(E),
 \qquad E\subseteq\mathbb N,
\]
defines a charge. Here the limit in $n$ is taken first, for
each fixed $k$, and the limit in $k$ is then taken. Applying
the preceding integral identity twice gives, for every fixed
bounded sequence $g$,
\[
 \lambda(g)
   =\lim_{k\to\mathcal U_1}\lim_{n\to\mathcal U_2}
        \lambda_{k,n}(g).
\]
The same observations apply to any finite number of successive
ultrafilter limits. The ultrafilters $\mathcal U_1$ and $\mathcal U_2$ need not be distinct; in the construction below we take
$\mathcal U_1=\mathcal U_2=\mathcal U$.

For background, see \cite[Sections~2 and~4]{Galvin2009} or
\cite[Sections~1.1 and~3.1]{Goldbring2022}.

\begin{lemma}\label{lem:inv}
Let $\mu$ be a translation-invariant charge. Then, for every bounded $g$ and all $N,k\in\mathbb N$,
  \[
  \inf_{t\ge k}\frac1N\sum_{j=0}^{N-1}g_{t+j}
  \;\le\; \int g\,d\mu \;\le\;
  \sup_{t\ge k}\frac1N\sum_{j=0}^{N-1}g_{t+j}.
  \]
\end{lemma}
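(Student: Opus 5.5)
The plan is to compare $g$ with its $N$-step moving average and use translation invariance to show that both have the same integral. Define the averaged sequence
\[
 h_t=\frac1N\sum_{j=0}^{N-1}g_{t+j}.
\]
Translation invariance gives $\mu(S^jg)=\mu(g)$ for every $j\ge0$, by iterating $\mu(Sg)=\mu(g)$. Linearity of the integral then yields
\[
 \mu(h)=\frac1N\sum_{j=0}^{N-1}\mu(S^jg)=\mu(g).
\]
So it suffices to bound $\mu(h)$ between $\inf_{t\ge k}h_t$ and $\sup_{t\ge k}h_t$.

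Next I use diffuseness, which the paper has already shown every translation-invariant charge has, to discard the first $k-1$ stages. Let $h'$ agree with $h$ for $t\ge k$, and set $h'_t=h_k$ (say) for $t<k$. Then $h-h'$ is supported on a finite set, so it tends to $0$, and the stated property of diffuse charges gives $\mu(h)=\mu(h')$. Every value of $h'$ lies in $[a,b]$, where $a=\inf_{t\ge k}h_t$ and $b=\sup_{t\ge k}h_t$. Positivity of the integral together with $\mu(\mathbf 1)=1$ then gives $a\le\mu(h')\le b$.

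Combining the two steps, $a\le\mu(g)\le b$, which is the claimed inequality. None of the steps is a real obstacle. The only point needing care is the bookkeeping in the first step, namely that $S^jg$ is bounded and that finitely many applications of invariance combine correctly under linearity. Both follow directly from the stated properties of the integral.
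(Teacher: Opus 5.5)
Your proof is correct and follows essentially the same route as the paper. Both arguments use translation invariance to replace $g$ by its $N$-step moving average, then discard the first $k-1$ stages because $\mu$ gives them zero mass. The only cosmetic difference is that the paper bounds the average by $\sup_{t\ge k}\varphi_N(t)+2\|g\|_\infty\mathbf 1_{\{1,\dots,k-1\}}$ instead of redefining its first $k-1$ terms.
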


\begin{proof}
Let $\varphi_N(t)=\frac1N\sum_{j=0}^{N-1}g_{t+j}$. By the equality $\int Sg\,d\mu=\int g\,d\mu$,
$\int g\,d\mu=\int\varphi_N\,d\mu$. Moreover
$\varphi_N\le \sup_{t\ge k}\varphi_N(t)+2\|g\|_\infty\mathbf 1_{\{1,\dots,k-1\}}$,
and $\mu(\{1,\dots,k-1\})=0$. This gives the upper bound; the lower bound
is symmetric.
\end{proof}

\begin{lemma}\label{lem:windows}
Let $I\subseteq\mathbb N$ be infinite and, for each $N\in I$, let
$W_N=\{k_N,k_N+1,\dots,k_N+N-1\}$ be a window of length $N$. Then there is a translation-invariant charge $\mu$ such that, for every bounded $g$,
\[
\liminf_{\substack{N\to\infty\\ N\in I}}\frac1N\sum_{t\in W_N}g_t
\;\le\;\int g\,d\mu\;\le\;
\limsup_{\substack{N\to\infty\\ N\in I}}\frac1N\sum_{t\in W_N}g_t .
\]
\end{lemma}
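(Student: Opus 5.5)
The plan is to build $\mu$ as an ultrafilter limit of uniform averages over the windows $W_N$, in the spirit of the ``Interval averages'' construction. Let $\mathcal U$ be a nonprincipal ultrafilter on $\mathbb N$ with $I\in\mathcal U$. Such an ultrafilter exists: the sets $I\setminus F$, with $F$ finite, have the finite intersection property, so they extend to an ultrafilter by Zorn's lemma, and this ultrafilter contains no finite set. For $E\subseteq\mathbb N$ set
\[
 \mu(E)=\lim_{N\to\mathcal U}\lambda_N(E),
 \qquad
 \lambda_N(E)=\frac{|E\cap W_N|}{N}\quad(N\in I),
\]
and put $\lambda_N=\lambda_{\min I}$, say, for $N\notin I$. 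The value chosen off $I$ does not matter, because $I\in\mathcal U$.

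Each $\lambda_N$ is a charge. By the ultrafilter-limit facts recalled above, $\mu$ is a charge and
\[
 \mu(g)=\lim_{N\to\mathcal U}\frac1N\sum_{t\in W_N}g_t
\]
for every bounded $g$.

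For translation invariance, the sets $(E+1)\cap W_N$ and $E\cap W_N$ correspond under the shift except at the two endpoints of the window. Hence $|\lambda_N(E+1)-\lambda_N(E)|\le 1/N$ for $N\in I$. Since $\mathcal U$ is nonprincipal, it contains $\{N\in I: N>M\}$ for every $M$, so $1/N\to0$ along $\mathcal U$ and therefore $\mu(E+1)=\mu(E)$.

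For the inequalities, write $x_N=\frac1N\sum_{t\in W_N}g_t$ and let $\ell=\limsup_{N\to\infty,\,N\in I}x_N$. For every $\varepsilon>0$ the set $\{N\in I: x_N<\ell+\varepsilon\}$ is cofinite in $I$. It therefore belongs to $\mathcal U$, because $I\in\mathcal U$ and $\mathcal U$ is nonprincipal. This forces $\lim_{N\to\mathcal U}x_N\le\ell+\varepsilon$, and letting $\varepsilon\to0$ gives the upper bound. The lower bound follows symmetrically.

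The only point that needs care is making the ultrafilter live on $I$, so that the limits are taken along $N\in I$; the paper's stated inequality $\liminf\le\lim_{\mathcal U}\le\limsup$ is for limits over all of $\mathbb N$. The argument above handles this, or one can instead relabel $I$ increasingly as $N_1<N_2<\cdots$ and use any nonprincipal ultrafilter in the index $m$ of $N_m$.
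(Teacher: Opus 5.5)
Your proof is correct and is essentially the paper's argument: $\mu$ is an ultrafilter limit of the uniform averages over the windows $W_N$, translation invariance comes from the $1/N$ boundary error, and the two bounds hold because the ultrafilter limit lies between the $\liminf$ and $\limsup$ along $I$. Taking a nonprincipal ultrafilter that contains $I$ is equivalent to the paper's choice, which enumerates $I$ as $N_1<N_2<\cdots$ and takes a nonprincipal ultrafilter limit in the index $j$; you also mention this alternative yourself.
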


\begin{proof}
Enumerate $I$ increasingly as $N_1<N_2<\cdots$ and apply the interval-average construction to $J_j=W_{N_j}$. It gives a translation-invariant charge $\mu$ such that, for every bounded $g$,
\[
 \mu(g)=\lim_{j\to\mathcal U} \frac1{N_j}\sum_{t\in W_{N_j}}g_t.
\]
The asserted inequalities follow because a nonprincipal
ultrafilter limit lies between the ordinary $\liminf$
and $\limsup$.
\end{proof}

\section{Finite MDPs}

Consider an MDP with finite state space $\mathcal S$, finite action sets
$A(s)$, rewards $r(s,a)$, transition probabilities $p(\cdot\mid s,a)$, and
initial state $s_1$. For a strategy $\sigma$ let
$g^\sigma_t=\mathbb E_\sigma[r(s_t,a_t)]$, and for a charge $\mu$ set
$\mu(\sigma)=\int g^\sigma\,d\mu$. Write $\|r\|=\max_{s,a}|r(s,a)|$.

For $\lambda\in(0,1]$, let $v_\lambda(s)$ denote the normalized
$\lambda$-discounted value of the MDP with initial state $s$, that is, the
supremum over strategies $\sigma$ of
$\mathbb E_\sigma\bigl[\sum_{t\ge1}\lambda(1-\lambda)^{t-1}r(s_t,a_t)\bigr]$.
Then $\|v_\lambda\|_\infty\le\|r\|$, and for every $s$,
\begin{equation}\label{eq:bellman}
v_\lambda(s)=\max_{a\in A(s)}\Bigl[\lambda r(s,a)
+(1-\lambda)\sum_{s'}p(s'\mid s,a)v_\lambda(s')\Bigr].
\end{equation}
A stationary strategy $f$, which plays a mixed action $f(s)\in\Delta(A(s))$ at each state $s$, is \emph{Blackwell optimal} if there is $\lambda_0>0$ such that $f$ is optimal in the $\lambda$-discounted MDP, for every initial state, for every $\lambda\in(0,\lambda_0]$. Blackwell \cite{Bla62} proved that every
MDP has a pure stationary Blackwell optimal strategy. For a stationary $f$, let $P_f$ be the $\mathcal S\times\mathcal S$ transition matrix and $r_f$ the reward vector that it induces:
$P_f(s,s')=\sum_{a\in A(s)}f(s)[a]\,p(s'\mid s,a)$ and
$r_f(s)=\sum_{a\in A(s)}f(s)[a]\,r(s,a)$. Then $g^f_t=(P_f^{t-1}r_f)(s_1)$.

\begin{lemma}\label{lem:opt}
Let $f$ be a stationary Blackwell optimal strategy. Then:
\begin{enumerate}
  \item the limits $v=\lim_{\lambda\to0}v_\lambda$ and
  $h=\lim_{\lambda\to0}(v_\lambda-v)/\lambda$ exist;
  \item $P_fv=v$ and $r_f=v+h-P_fh$;
  \item for every $s$ and $a\in A(s)$,
  $\sum_{s'}p(s'\mid s,a)v(s')\le v(s)$, and if equality holds, then
  $r(s,a)+\sum_{s'}p(s'\mid s,a)h(s')\le v(s)+h(s)$;
  \item there is a function $H$ on $\mathcal S$ such that, for every $s$ and
  $a\in A(s)$,
  \[
  r(s,a)+\sum_{s'}p(s'\mid s,a)H(s')\le v(s)+H(s).
  \]
\end{enumerate}
\end{lemma}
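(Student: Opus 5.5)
The plan is to extract everything from the Laurent expansion of the discounted values, which rests on Blackwell's theorem that $f$ is optimal for all small $\lambda$. Fix $\lambda_0$ as in the definition of Blackwell optimality. For $\lambda\in(0,\lambda_0]$, $f$ is optimal in the $\lambda$-discounted MDP, so
\[
v_\lambda=\lambda\,(I-(1-\lambda)P_f)^{-1}r_f .
\]
The right-hand side is a rational function of $\lambda$ with no pole at $\lambda=0$, because $\|v_\lambda\|_\infty\le\|r\|$ and a rational function bounded near $0$ is analytic there. Hence
\[
v_\lambda=v+\lambda h+O(\lambda^2),
\]
which gives (1) with $v$ and $h$ the first two coefficients. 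For (2), I will write the fixed-point identity $v_\lambda=\lambda r_f+(1-\lambda)P_fv_\lambda$ (this is \eqref{eq:bellman} evaluated along $f$). Substituting the expansion and comparing orders: the order $0$ terms give $v=P_fv$, and the order $1$ terms give $h=r_f-P_fv+P_fh$, that is, $r_f=v+h-P_fh$ after using $P_fv=v$.

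For (3), I will use the inequality form of \eqref{eq:bellman}: for every $a\in A(s)$,
\[
v_\lambda(s)\ge\lambda r(s,a)+(1-\lambda)\sum_{s'}p(s'\mid s,a)v_\lambda(s').
\]
Substituting the expansion, the order $0$ terms give $v(s)\ge\sum_{s'}p(s'\mid s,a)v(s')$. If equality holds, the order $0$ terms cancel. Dividing by $\lambda$ and letting $\lambda\to0$ then gives
\[
h(s)\ge r(s,a)-\sum_{s'} p(s'\mid s,a)v(s')+\sum_{s'} p(s'\mid s,a)h(s').
\]
Since $\sum_{s'} p(s'\mid s,a)v(s')=v(s)$, this is exactly the stated inequality.

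For (4), the plan is $H=h+Mv$ with a constant $M$ to be chosen large. The inequality to prove is
\[
r(s,a)+\sum_{s'} p(s'\mid s,a)h(s')-v(s)-h(s)\le M\Bigl(v(s)-\sum_{s'} p(s'\mid s,a)v(s')\Bigr).
\]
For pairs $(s,a)$ with equality in the first part of (3), the left side is $\le0$ by (3) and the right side is $0$. For the remaining pairs, the quantity $v(s)-\sum_{s'}p(s'\mid s,a)v(s')$ is strictly positive. These pairs are finite in number because $\mathcal S$ and the action sets are finite, so their minimum $\eta>0$. Choosing $M\ge 4\|r\|\dots/\eta$, or more simply $M$ at least the maximum of the left side divided by $\eta$ over these pairs, makes the inequality hold there too. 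No step looks like a real obstacle; the only point needing care is justifying the analytic expansion in (1), which is where the rationality of $(I-(1-\lambda)P_f)^{-1}$ together with the boundedness of $v_\lambda$ does the work.
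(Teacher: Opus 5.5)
Your proposal is correct and follows the paper's proof essentially step for step. The steps match: the expansion $v_\lambda=v+\lambda h+O(\lambda^2)$ comes from the bounded rational formula $\lambda(I-(1-\lambda)P_f)^{-1}r_f$; (2) and (3) come from comparing coefficients in the fixed-point equation for $f$ and in the Bellman inequality; and (4) uses $H=h+Mv$ with the minimum gap $\eta>0$. The only detail to tighten is in (4). Take $M\ge0$ explicitly, as the paper does, in addition to $M\eta\ge$ the maximum of the left side, because a negative $M$ fails at pairs with $v(s)-\sum_{s'}p(s'\mid s,a)v(s')>\eta$. Also drop the bound $4\|r\|\dots/\eta$, since $\|h\|_\infty$ is not controlled by $\|r\|$ alone.
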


\begin{proof}
(1) For $\lambda\in(0,\lambda_0]$, the optimality of $f$ gives
\begin{equation}\label{eq:fixedf}
v_\lambda=\lambda r_f+(1-\lambda)P_fv_\lambda ,
\end{equation}
so $v_\lambda=\lambda\bigl(I-(1-\lambda)P_f\bigr)^{-1}r_f$. The entries of
this vector are rational functions of $\lambda$ that are bounded by $\|r\|$ on $(0,\lambda_0]$. Hence they have no pole at $0$, and so they are analytic at $0$. This proves~(1), and moreover $v_\lambda=v+\lambda h+O(\lambda^2)$ as $\lambda\to0$.

(2) Substituting this expansion in equation~\eqref{eq:fixedf} and comparing the coefficients of $1$ and $\lambda$ gives $v=P_fv$ and $h=r_f-P_fv+P_fh$, that is, $r_f=v+h-P_fh$.

(3) By the optimality equation~\eqref{eq:bellman},
$v_\lambda(s)\ge\lambda r(s,a)+(1-\lambda)\sum_{s'}p(s'\mid s,a)v_\lambda(s')$.
Substituting the expansion and comparing the coefficients of $1$ gives the
first inequality. If it holds with equality, comparing the coefficients of
$\lambda$ gives the second.

(4) Let $\gamma>0$ be the minimum of $v(s)-\sum_{s'}p(s'\mid s,a)v(s')$ over the pairs $(s,a)$ for which it is positive (any $\gamma>0$ if there is no such pair). Choose $M\ge0$ with
$M\gamma\ge\max_{s,a}\bigl[r(s,a)+\sum_{s'}p(s'\mid s,a)h(s')-v(s)-h(s)\bigr]$,
and set $H=h+Mv$. If $\sum_{s'}p(s'\mid s,a)v(s')=v(s)$, the claimed
inequality is the second inequality in~(3) plus $M$ times this equality.
Otherwise, $M\sum_{s'}p(s'\mid s,a)v(s')\le Mv(s)-M\gamma$, so the left side is at most $r(s,a)+\sum_{s'}p(s'\mid s,a)h(s')+Mv(s)-M\gamma
\le v(s)+h(s)+Mv(s)$.
\end{proof}

The following estimates are essentially known. Let $V_N(s)$ denote the
maximal expected total payoff in the $N$-stage MDP with initial state $s$.
Then $V_N=Nv+O(1)$, and every stationary Blackwell optimal strategy $f$ is uniformly $N$-stage optimal, that is, $V_N(s)-\mathbb E_{f}\bigl[\sum_{t=1}^Nr(s_t,a_t)\,\big|\,s_1=s\bigr]$ is
bounded in $N$ and $s$; see, e.g., \cite[Corollary~8.3]{BK78} (where Blackwell optimal strategies are called uniformly discount optimal) and \cite{Bro65}. Conditioning on the history up to stage $k$, and using
Lemma~\ref{lem:opt}(2)--(3), these two facts give both bounds of
Lemma~\ref{lem:mdpwindows} on every window. For completeness, we give a
short self-contained proof.

\begin{lemma}\label{lem:mdpwindows}
Let $f$ be a stationary Blackwell optimal strategy, and let $v$ be as in Lemma~\ref{lem:opt}. There is a constant $C$ such that, for every initial state, every strategy $\sigma$, and all $k,N\in\mathbb N$,
\[
\Bigl|\sum_{t=k}^{k+N-1}g^f_t-Nv(s_1)\Bigr|\le C
\qquad\text{and}\qquad
\sum_{t=k}^{k+N-1}g^\sigma_t\le Nv(s_1)+C .
\]
\end{lemma}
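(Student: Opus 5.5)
The plan is to derive both bounds from the functions $v$ and $H$ of Lemma~\ref{lem:opt}, treating them as potentials. Each bound becomes a telescoping sum. We first collect the relations we need, written as vector identities on $\mathcal S$.

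By Lemma~\ref{lem:opt}(2), $P_fv=v$ and $r_f=v+h-P_fh$.

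By Lemma~\ref{lem:opt}(4), for every $s$ and $a\in A(s)$,
\[
r(s,a)\le v(s)+H(s)-\sum_{s'}p(s'\mid s,a)H(s').
\]

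By Lemma~\ref{lem:opt}(3), for every $s$ and $a\in A(s)$,
\[
\sum_{s'}p(s'\mid s,a)v(s')\le v(s).
\]

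Set $K=\max(\|h\|_\infty,\|H\|_\infty)$.

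\emph{The first bound.} Under $f$ we have $g^f_t=(P_f^{t-1}r_f)(s_1)$. Substituting $r_f=v+h-P_fh$ and using $P_f^{t-1}v=v$ gives
\[
g^f_t=v(s_1)+(P_f^{t-1}h)(s_1)-(P_f^{t}h)(s_1).
\]
Summing over $t=k,\dots,k+N-1$, the $h$-terms telescope. The remainder is $(P_f^{k-1}h)(s_1)-(P_f^{k+N-1}h)(s_1)$, which is bounded in absolute value by $2\|h\|_\infty$. So the first inequality holds with $C\ge 2K$.

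\emph{The second bound.} For an arbitrary $\sigma$, condition on the history up to stage $t$ and apply Lemma~\ref{lem:opt}(4) at $(s_t,a_t)$. Taking expectations gives
\[
g^\sigma_t\le \mathbb E_\sigma[v(s_t)]+\mathbb E_\sigma[H(s_t)]-\mathbb E_\sigma[H(s_{t+1})].
\]
Lemma~\ref{lem:opt}(3), applied the same way, shows that $\mathbb E_\sigma[v(s_t)]$ is nonincreasing in $t$. Hence $\mathbb E_\sigma[v(s_t)]\le v(s_1)$ for all $t$. Summing over the window, the $H$-terms telescope to at most $2\|H\|_\infty$, which yields
\[
\sum_{t=k}^{k+N-1}g^\sigma_t\le Nv(s_1)+2\|H\|_\infty.
\]
Thus $C=2K$ works for both bounds. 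It is uniform in the initial state, in $\sigma$, and in $k$ and $N$, because $h$ and $H$ do not depend on any of these.

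The only delicate point is the supermartingale property of $v(s_t)$, which requires conditioning on the history. Given the history up to stage $t$, we have
\[
\mathbb E_\sigma[v(s_{t+1})\mid \text{history}]=\sum_{s'}p(s'\mid s_t,a_t)v(s')\le v(s_t).
\]
This holds pointwise for every realized action $a_t$, so it holds for randomized $\sigma$ as well. Everything else is routine telescoping.
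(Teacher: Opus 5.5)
Your proposal is correct and follows the paper's proof essentially step for step. The paper uses the same telescoping identity $g^f_t=v(s_1)+(P_f^{t-1}h)(s_1)-(P_f^{t}h)(s_1)$ for $f$, the same conditional application of Lemma~\ref{lem:opt}(4) together with the monotonicity of $\mathbb E_\sigma[v(s_t)]$ from Lemma~\ref{lem:opt}(3) for arbitrary $\sigma$, and the same constant $C=2\max(\|h\|_\infty,\|H\|_\infty)$.
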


In words: on every window of $N$ consecutive stages, wherever it starts, the total payoff of $f$ differs from $Nv(s_1)$ by a bounded amount, and no
strategy exceeds $Nv(s_1)$ by more than a bounded amount. The bound does not depend on the starting stage of the window, its length, or the strategy.

\begin{proof}
Let $h$ and $H$ be as in Lemma~\ref{lem:opt}, and let
$C=2\max\bigl(\|h\|_\infty,\|H\|_\infty\bigr)$.

\medskip\noindent\textbf{The bound for $f$.}
By Lemma~\ref{lem:opt}(2), $P_f^{t-1}v=v$ and
$g^f_t=\bigl(P_f^{t-1}(v+h-P_fh)\bigr)(s_1)
=v(s_1)+(P_f^{t-1}h)(s_1)-(P_f^{t}h)(s_1)$. Summing over the window, the
$h$-terms telescope:
\[
\sum_{t=k}^{k+N-1}g^f_t=Nv(s_1)+(P_f^{k-1}h)(s_1)-(P_f^{k+N-1}h)(s_1),
\]
and the last two terms are at most $2\|h\|_\infty$ in absolute value.

\medskip\noindent\textbf{The bound for $\sigma$.}
Fix any strategy $\sigma$. Conditioning on the history up to $(s_t,a_t)$,
Lemma~\ref{lem:opt}(4) gives
\[
g^\sigma_t\le\mathbb E_\sigma\bigl[v(s_t)+H(s_t)-H(s_{t+1})\bigr],
\]
and the first inequality in Lemma~\ref{lem:opt}(3) gives
$\mathbb E_\sigma[v(s_{t+1})]\le\mathbb E_\sigma[v(s_t)]$, so
$\mathbb E_\sigma[v(s_t)]\le v(s_1)$ for all $t$. Summing over the window, the
$H$-terms telescope and
\[
\sum_{t=k}^{k+N-1}g^\sigma_t\le Nv(s_1)+2\|H\|_\infty .
\]
\end{proof}

Taking $k=1$, Lemma~\ref{lem:mdpwindows} shows that $v(s_1)$ is the
limiting-average value of the MDP and that $f$ is limiting-average optimal.
The rate $O(1/N)$ for window averages cannot be improved in general: in a
deterministic MDP that alternates between two states with rewards $1$ and $0$,
the average over any window of odd length $N$ differs from $1/2$ by $1/(2N)$.

\begin{theorem}\label{thm:mdp}
Let $f$ be a stationary Blackwell optimal strategy, and let $v$ be the
limiting-average value of the MDP. For every initial state, every strategy
$\sigma$, and every translation-invariant charge $\mu$,
\[
\mu(f)=v(s_1)\ge\mu(\sigma).
\]
\end{theorem}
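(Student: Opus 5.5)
The plan is to combine the window estimates of Lemma~\ref{lem:mdpwindows} with the window sandwich of Lemma~\ref{lem:inv}, letting the window length $N$ tend to infinity. Fix an initial state $s_1$, a strategy $\sigma$, and a translation-invariant charge $\mu$. Let $C$ be the constant from Lemma~\ref{lem:mdpwindows}; it does not depend on $k$, $N$, or $\sigma$.

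\emph{Step 1: the equality $\mu(f)=v(s_1)$.} Apply Lemma~\ref{lem:inv} to $g=g^f$ with $k=1$ and an arbitrary $N$. The first inequality of Lemma~\ref{lem:mdpwindows} says that for every $t\ge1$,
\[
\Bigl|\frac1N\sum_{j=0}^{N-1}g^f_{t+j}-v(s_1)\Bigr|\le \frac CN .
\]
Hence both the infimum and the supremum over $t$ of these window averages lie in $[v(s_1)-C/N,\,v(s_1)+C/N]$. Lemma~\ref{lem:inv} then gives $|\mu(f)-v(s_1)|\le C/N$. Letting $N\to\infty$ yields $\mu(f)=v(s_1)$.

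\emph{Step 2: the inequality $\mu(\sigma)\le v(s_1)$.} Apply the upper bound of Lemma~\ref{lem:inv} to $g=g^\sigma$. The second inequality of Lemma~\ref{lem:mdpwindows}, taken with $k=t$, gives for every $t$
\[
\frac1N\sum_{j=0}^{N-1}g^\sigma_{t+j}\le v(s_1)+\frac CN .
\]
Therefore $\mu(\sigma)\le v(s_1)+C/N$ for every $N$, and letting $N\to\infty$ gives $\mu(\sigma)\le v(s_1)$.

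\emph{Step 3: identifying $v$ with the limiting-average value.} The statement refers to $v$ as the limiting-average value, while the lemmas use $v=\lim_{\lambda\to0}v_\lambda$ from Lemma~\ref{lem:opt}. These coincide by the remark after Lemma~\ref{lem:mdpwindows}: take $k=1$ in that lemma and divide by $N$.

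There is no real obstacle here. The only point to check is that $C$ is uniform in the starting stage $k$ of the window, since Lemma~\ref{lem:inv} requires control of window averages over all $t\ge k$, not only $t=1$. This uniformity is exactly what Lemma~\ref{lem:mdpwindows} provides.
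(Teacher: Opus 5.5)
Your proposal is correct and matches the paper's proof: the window bounds of Lemma~\ref{lem:mdpwindows}, which are uniform in the starting stage, feed into the sandwich of Lemma~\ref{lem:inv}, giving $|\mu(f)-v(s_1)|\le C/N$ and $\mu(\sigma)\le v(s_1)+C/N$ for every $N$, and letting $N\to\infty$ finishes. Your Step~3 likewise matches the paper's remark that taking $k=1$ in Lemma~\ref{lem:mdpwindows} identifies $v(s_1)$ with the limiting-average value.
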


\begin{proof}
By Lemma~\ref{lem:mdpwindows}, for every $N$ all window averages of
$g^f$ of length $N$ lie within $C/N$ of $v(s_1)$, and all window averages of $g^\sigma$ of length $N$ are at most $v(s_1)+C/N$. By
Lemma~\ref{lem:inv}, $|\mu(f)-v(s_1)|\le C/N$ and
$\mu(\sigma)\le v(s_1)+C/N$ for every $N$. Letting $N\to\infty$ gives $\mu(f)=v(s_1)\ge\mu(\sigma)$.
\end{proof}

\begin{remark}
Theorem~\ref{thm:mdp} can also be deduced from the proof of
Lemma~\ref{lem:mdpwindows}. That proof shows that $g^f=v(s_1)+\varphi-S\varphi$ with
$\varphi_t=(P_f^{t-1}h)(s_1)$, and $g^\sigma\le w+\psi-S\psi$ with $w_t=\mathbb E_\sigma[v(s_t)]\le v(s_1)$ and
$\psi_t=\mathbb E_\sigma[H(s_t)]$. Since $\varphi$ and $\psi$ are bounded,
$\int(\varphi-S\varphi)\,d\mu
=\int(\psi-S\psi)\,d\mu=0$. It follows that $\mu(f)=v(s_1)$ and
$\mu(\sigma)\le\mu(w)\le v(s_1)$.
\end{remark}

\begin{remark}\label{rem:stationary}
Theorem~\ref{thm:mdp} extends to every two-player zero-sum stochastic game in which both players have stationary optimal strategies in the limiting-average sense. Indeed, if Player~1 plays a stationary strategy $x$, then Player~2 faces an MDP; let $w(x)$ denote its limiting-average value at $s_1$, with Player~2 minimizing.
Theorem~\ref{thm:mdp}, applied to this MDP, gives $\mu(x,\tau)\ge w(x)$ for every strategy $\tau$ of Player~2 and every translation-invariant charge $\mu$, with equality when $\tau$ is a stationary Blackwell optimal strategy of this MDP. Hence $\inf_\tau\mu(x,\tau)=w(x)$ for every translation-invariant charge $\mu$.
If $x$ is limiting-average optimal, then $w(x)\ge v(s_1)$, where $v$ is the limiting-average value of the game. The symmetric argument applies to a stationary limiting-average optimal strategy $y$ of Player~2. Hence
\[
\mu(x,\tau)\ge v(s_1)\ge\mu(\sigma,y)
\]
for all strategies $\sigma$ and $\tau$ and every translation-invariant charge $\mu$, so $x$ and $y$ are optimal for all translation-invariant charges simultaneously.

Both players have stationary optimal strategies in the
limiting-average sense, for instance, in games with perfect information, in irreducible games, and in games in which one player controls the
transitions; see \cite[Section~17]{BK78} and the references there. More
generally, the same argument shows that if both players have, for every
$\varepsilon>0$, stationary $\varepsilon$-optimal strategies in the
limiting-average sense, then these are $\varepsilon$-optimal for all
translation-invariant charges simultaneously.

The Big Match is the simplest game outside this class: Player~2 has a stationary optimal strategy, but Player~1 has no stationary $\varepsilon$-optimal strategy for $\varepsilon<1/2$.
Proposition~\ref{prop:bm} shows that, correspondingly, Player~1's guarantee with respect to all translation-invariant charges drops to $0$.
\end{remark}

\section{The Big Match}

Consider the Big Match with $0$--$1$ payoffs,
\[
\begin{array}{c|cc}
  & L & R \\ \hline
T & 0 & 1 \\
B & 1^{*} & 0^{*}
\end{array}
\]
where Player~1 (the row player) is the maximizer and
an asterisk denotes absorption at the indicated payoff.
For Player~2, $L$ is the risky action: it leads to the
absorbing payoff $1$ if Player~1 plays $B$.

For strategies $\sigma$ of Player~1 and $\tau$ of Player~2,
let $x_t$ denote the payoff at stage $t$ and, for a
charge $\lambda$, set
\[
 \lambda(\sigma,\tau)
   =\int_{\mathbb N}
       \mathbb E_{\sigma,\tau}[x_t]\,d\lambda(t).
\]
The lower and upper values of the game under $\lambda$
are defined by
\[
 \underline v(\lambda)
   =\sup_\sigma\inf_\tau\lambda(\sigma,\tau),
 \qquad
 \overline v(\lambda)
   =\inf_\tau\sup_\sigma\lambda(\sigma,\tau).
\]
Unless explicitly restricted to pure strategies, the
supremum and infimum range over all strategies of
Player~1 and Player~2, respectively.

We first determine the upper value.

\begin{lemma}
\label{bmfc:lem:upper}
For every charge $\lambda$, the upper value
$\overline v(\lambda)$ is $1/2$.
\end{lemma}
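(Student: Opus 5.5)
We prove the two inequalities $\overline v(\lambda)\le 1/2$ and $\overline v(\lambda)\ge 1/2$ separately, each by a stage-by-stage argument. Since $\lambda$ is positive and normalized, a pointwise bound on the sequence $\mathbb E_{\sigma,\tau}[x_t]$ transfers to $\lambda(\sigma,\tau)$. No property of $\lambda$ beyond being a charge is needed.

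\emph{Upper bound.} Let $y$ be the stationary strategy of Player~2 that plays $L$ and $R$ with probability $1/2$ each at every stage, independently of the history. We check by induction on $t$ that $\mathbb E_{\sigma,y}[x_t]=1/2$ for every strategy $\sigma$ of Player~1.

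Condition on the history before stage $t$. If absorption has already occurred, then $x_t$ is the absorbing payoff, which equals $1$ or $0$.
\begin{itemize}
\item If absorption happened via $(B,L)$ at some earlier stage $s$, the probability of that event is $1/2$ times the probability that $B$ was played at $s$ before absorption.
\item The symmetric event via $(B,R)$ has the same probability.
\end{itemize}
So the contributions of $1$ and $0$ from absorbed histories balance to an average of $1/2$. On non-absorbed histories, whatever Player~1 plays at stage $t$, the stage payoff is $1$ or $0$ with conditional probability $1/2$ each, because Player~2's independent coin makes the payoff $1$ exactly when the pair is $(T,R)$ or $(B,L)$.

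A cleaner formulation: for each stage $s\le t$ the realized payoff at $s$ (if not absorbed before $s$) is a fair coin independent of the past. Then $x_t$ equals the coin of the absorption stage, or of stage $t$ itself if there is no absorption, and the stopping structure preserves the fairness. Hence $\lambda(\sigma,y)=\lambda(\tfrac12\mathbf 1)=1/2$ for all $\sigma$, and so $\overline v(\lambda)\le 1/2$.

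\emph{Lower bound.} Fix any strategy $\tau$ of Player~2. We construct a pure strategy $\sigma$ of Player~1 such that $\mathbb E_{\sigma,\tau}[x_t]\ge 1/2$ for every $t$. Until absorption, the only nonabsorbed history under a pure $\sigma$ is ``$T$ at every stage so far'' together with Player~2's past actions. Let $q_t$ be the conditional probability that $\tau$ plays $L$ at stage $t$ given this history. Define $\sigma$ by the following rule at each nonabsorbed history:
\begin{itemize}
\item play $B$ if $q_t\ge 1/2$;
\item play $T$ otherwise.
\end{itemize}
If $B$ is played, the game absorbs with payoff $1$ with conditional probability $q_t\ge 1/2$ and payoff $0$ otherwise, so all future payoffs along that branch have conditional expectation $q_t\ge1/2$. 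If $T$ is played, the stage payoff is $1-q_t>1/2$ in conditional expectation.

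Consequently $\mathbb E[x_t]$ is an average over histories of quantities each at least $1/2$: either the absorption value $q_s\ge 1/2$ of an earlier absorption stage $s$, or the current value $1-q_t$. This gives $\mathbb E_{\sigma,\tau}[x_t]\ge 1/2$ for all $t$, hence $\lambda(\sigma,\tau)\ge 1/2$ and $\sup_\sigma\lambda(\sigma,\tau)\ge1/2$. Taking the infimum over $\tau$ yields $\overline v(\lambda)\ge 1/2$.

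The only delicate step is the bookkeeping of conditional expectations across absorption in both parts. We handle it by writing $\mathbb E[x_t]$ as a sum over the first absorption stage $s\le t$ (or no absorption by $t$) and bounding each conditional term as above. The remaining steps are immediate.
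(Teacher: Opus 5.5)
Your proof is correct and follows the paper's approach. For the upper bound, the stationary strategy of Player~2 that plays $L$ and $R$ with probability $1/2$ each gives expected payoff $1/2$ at every stage. For the lower bound, against any $\tau$ a pure reply of Player~1 gives expected payoff at least $1/2$ at every stage, and this bound transfers to $\lambda$ by positivity and normalization.

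The only difference is in the reply. The paper's reply is open-loop: play $T$ until the first stage $t_0$ at which the unconditional probability of $L$ under always-$T$ exceeds $1/2$, then play $B$. This avoids the conditioning on Player~2's past actions that your history-dependent threshold rule requires.
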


\begin{proof}
The stationary strategy of Player~2 that chooses $L$ and $R$
with equal probabilities yields expected payoff $1/2$ at
every stage against every strategy of Player~1. Hence
$\overline v(\lambda)\le1/2$.

It remains to prove that $\overline v(\lambda)\ge1/2$. Fix a strategy $\tau$ of Player~2 and let $q_t$ be the probability of $L$ at stage $t$ when Player~1 always plays $T$. If $q_t\le1/2$ for every $t$, always playing $T$ yields expected payoff $1-q_t\ge1/2$ at every stage.
Otherwise, let $t_0$ be the first stage with $q_{t_0}>1/2$. Playing $T$ before $t_0$ and $B$ at $t_0$ yields expected payoff $1-q_t\ge1/2$ at every stage $t<t_0$ and $q_{t_0}>1/2$ at every stage $t\ge t_0$.

Thus, against every strategy $\tau$ of Player~2, there is a pure strategy $\sigma$ of Player~1 that gives expected payoff at least $1/2$ at every stage. By monotonicity of the integral, $\lambda(\sigma,\tau)\ge1/2$, and hence $\overline v(\lambda)\ge1/2$.
\end{proof}

We first state and prove that Player~1 cannot guarantee
a positive payoff simultaneously for all translation-invariant
charges. Although this conclusion follows from the stronger
fixed-charge result of the next section, whose proof is more
involved, the elementary argument given here explains the
role of long intervals of stages in which Player~2 plays
the risky action $L$.

\begin{proposition}\label{prop:bm}
For every strategy $\sigma$ of Player~1 and every
$\delta>0$, there exist a pure strategy $\tau$ of Player~2
and a translation-invariant charge $\mu$ on $\mathbb N$
such that $\mu(\sigma,\tau)<\delta$.
\end{proposition}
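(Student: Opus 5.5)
Given $\sigma$ and $\delta>0$, I will build a pure $\tau$ that plays long blocks of the risky action $L$ separated by long blocks of $R$. The charge $\mu$ will then come from Lemma~\ref{lem:windows}, applied to windows placed inside the $L$-blocks. Fix $\varepsilon>0$ small and let $\tau$ be defined on successive blocks of lengths to be chosen. On the $n$-th $L$-block, of length $N_n$ (with $N_n\to\infty$), Player~2 plays $L$.

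The key quantity is the probability $\pi_t$ that play is absorbed with payoff $1$ by stage $t$. Absorption at $1$ only occurs if Player~1 plays $B$ while Player~2 plays $L$. Since $\tau$ is pure and the only randomness comes from $\sigma$, the probability $\pi_t$ is nondecreasing in $t$ and bounded by $1$. During an $L$-block the unabsorbed payoff is $0$ (from $TL$), so the expected stage payoff at stage $t$ in an $L$-block equals $\pi_t$ minus nothing. More precisely, $\mathbb E[x_t]=\mathbb P(\text{absorbed at }1\text{ before }t)\le \pi_{t}$, and absorption at $0$ contributes nothing.

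Since $\pi_t$ is monotone and bounded, the total increase of $\pi$ over all $L$-blocks is at most $1$. The problem is that $\pi_t$ itself need not be small. Once Player~1 has absorbed with some probability, that mass stays at payoff $1$ forever. So the construction must ensure that $\pi_t$ stays below $\delta$ throughout.

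I handle this adaptively. Player~2 keeps playing $R$ while Player~1's probability of playing $B$ in the future (against $R$ from now on) is significant. Against $R$, playing $B$ absorbs at $0$, which only helps Player~2. Player~2 switches to an $L$-block only when the conditional probability that Player~1 ever plays $B$ during the next $N$ stages, given continued play, is small.

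Concretely, let $\beta$ be the probability, under $\sigma$ against the all-$R$ continuation, that Player~1 plays $B$ at some stage. Consider the sequence of probabilities that Player~1 plays $B$ in stages $\ge m$ and is still unabsorbed, against $R$ throughout. This sequence tends to $0$. Choose $m_1$ with this tail probability below $\delta/2$, and start the first $L$-block of length $N_1$ at $m_1$.

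Against the modified $\tau$ (which agrees with all-$R$ before $m_1$), the probability of playing $B$ during $[m_1,m_1+N_1)$ still depends on $\sigma$'s behavior, and $\sigma$ sees Player~2's $L$'s. This is the main obstacle: Player~1 reacts to the $L$'s.

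To fix this, I use a single $L$-block of length $N$ preceded by $R$'s, with its start chosen as follows. For each candidate start $m$, let $\tau_{m,N}$ be the pure strategy that plays $R$ except on $[m,m+N)$. Let $p(m,N)$ be the probability that Player~1 plays $B$ during that block, given unabsorbed at $m$. The histories before $m$ are the same for all these strategies (all $R$).

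Now take $m$ ranging over disjoint positions $m_j=jN$, $j=1,\dots,K$. The events ``Player~1 plays $B$ for the first time in $[m_j,m_j+N)$'' are not directly disjoint across different $\tau$'s. However, consider instead the single strategy that plays $L$ on all $K$ blocks consecutively, i.e.\ plays $L$ on $[N,(K+1)N)$. Its absorption-at-$1$ probabilities in the $K$ sub-blocks are disjoint events summing to at most $1$. So some sub-block $j$ has probability at most $1/K$ of absorption inside it. Absorption before that sub-block is also bounded, which means this alone is not enough.

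The cleaner choice is to take the expected payoff, which is the absorbed-at-1 probability, and average. The average over the whole long $L$-stretch $[N,(K+1)N)$ of $\pi_t$ is at most $\pi_{(K+1)N}$, which can be large. So instead I take the first sub-block. There, $\pi$ grows from $\pi_N$, and $\pi_N$ is at most the probability of playing $B$ against $L$ before stage $N$. That probability is $0$, since Player~2 plays $R$ before $N$.

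So with $\tau$ = play $R$ before $m$, $L$ from $m$ on, $\pi_t$ during $[m,m+N)$ is the probability Player~1 has played $B$ in $[m,t)$. This is the increment of a bounded monotone function of the block start along a consistent family only if Player~1's reaction is fixed. Hence the cleanest plan is to use $\tau$ equal to $R$ until stage $M$ and $L$ afterwards.

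With this $\tau$, $\pi_t\uparrow\pi_\infty\le1$ along a single strategy. Choose $m\ge M$ with $\pi_\infty-\pi_m<\delta$. Unfortunately $\pi_m$ may be close to $1$: Player~1 waits for $L$ and then plays $B$.

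The resolution is the intended one: $\tau$ plays $L$ for $N_1$ stages, then $R$ for a long time, then $L$ for $N_2$ stages, and so on. Player~1 absorbing at $1$ in early $L$-blocks bounds the total mass. I iterate, so that the windows $W_{N_n}$ chosen for Lemma~\ref{lem:windows} are blocks where the newly absorbed mass is small. The main difficulty is showing that Player~1 cannot always react; I expect to settle it by the disjointness-counting argument above, applied to first-$B$ events across many $L$-blocks.
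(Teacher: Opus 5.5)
Your reduction is correct. If $\mu$ comes from Lemma~\ref{lem:windows} with windows inside the $L$-runs of $\tau$, then $\mu(\sigma,\tau)=\pi_\infty$, the total probability of absorption at payoff $1$. So what you need is a pure sequence with $L$-runs of every length and $\pi_\infty<\delta$. The proposal never produces such a sequence. The step you defer (``I expect to settle it by the disjointness-counting argument'') is the entire difficulty, and that argument cannot settle it. It controls only the mass absorbed at $1$ inside one chosen sub-block, while mass absorbed at $1$ in earlier $L$-stages stays at payoff $1$ forever. For the same reason, choosing windows where the \emph{newly} absorbed mass is small is the wrong target: the cumulative mass must be below $\delta$.

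Your switching rules also fail. Let $\sigma$ play $B$ with probability $\eta$ at every stage that immediately follows an $L$ of Player~2, and $T$ otherwise. Along a run of $R$'s this $\sigma$ never plays $B$ after the first stage, so waiting reveals nothing. Meanwhile, an $L$-block of length $N$ entered with unabsorbed mass $u$ converts $u\bigl(1-(1-\eta)^{N-1}\bigr)$ of it into payoff $1$, however long Player~2 has waited. A rule that switches to $L$ ``when it looks safe'' therefore either switches at once and loses a large fraction of the remaining mass on every long block, or never switches. This $\sigma$ is beaten only by baiting it with isolated $L$'s followed by $R$, which induces absorption at $0$ before any long $L$-run. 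You have no mechanism that does this against a general $\sigma$.

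The missing idea is randomization with a small $L$-probability. Player~1's decision to play $B$ for the first time at stage $t$ depends only on $a_1,\dots,a_{t-1}$. So if Player~2's sequence is drawn i.i.d.\ with $L$-probability $p$, each first $B$ meets $L$ with conditional probability exactly $p$. Summing over $t$ gives $\mathbb E_p[\pi_\infty]=p\,(1-\mathbb E_p[S])\le p$, where $S$ is the probability of no absorption.

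With $2p\le\delta$, Markov's inequality gives $\pi_\infty<\delta$ with probability at least $1/2$. The second Borel--Cantelli lemma, applied for each $N$ to the disjoint blocks $\{mN+1,\dots,(m+1)N\}$, shows that almost every sequence has $L$-runs of every length. A realization in both events gives the pure $\tau$. Lemma~\ref{lem:windows}, applied to its $L$-runs, gives $\mu$ with $\mu(\sigma,\tau)=\pi_\infty<\delta$.

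If you want to keep a sequential block construction, run the same estimate from each finite history with $L$-probability $p_N$, where $\sum_N 2p_N<\delta$. This yields finite segments, each containing an $L$-run of length $N$ and adding at most $2p_N$ to the mass absorbed at $1$. Some such averaging argument is indispensable, and it is absent from your proposal.
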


\begin{proof}
Fix a strategy $\sigma$ of Player~1.

\medskip
\noindent\textbf{Step 0: Payoffs against a pure strategy.}
We represent a pure strategy of Player~2 by a sequence
$a=(a_t)_{t\ge1}\in\{L,R\}^{\mathbb N}$, where $a_t$
specifies its action at stage $t$ if absorption has not
occurred earlier.

Let $p_t(a)$ be the probability that Player~1 first
plays $B$ at stage $t$ against this sequence.
The function $p_t(a)$ depends only on
$a_1,\ldots,a_{t-1}$.
Set
\[
 A_1(a)=\sum_{t\ge1}p_t(a)\mathbf1_{\{a_t=L\}},
 \qquad
 A_0(a)=\sum_{t\ge1}p_t(a)\mathbf1_{\{a_t=R\}},
\]
and
\[
 S(a)=1-A_1(a)-A_0(a).
\]
Thus $A_i(a)$ is the probability of absorption with
payoff $i$, for $i\in\{0,1\}$, and $S(a)$ is the
probability that absorption never occurs.
The functions $A_0$, $A_1$, and $S$ are Borel,
since each $p_t$ depends on finitely many coordinates
of $a$.

Write $R(a)=\{t\in\mathbb N:a_t=R\}$.
For every diffuse charge $\lambda$,
\begin{equation}
 \lambda(\sigma,a)=A_1(a)+S(a)\lambda(R(a)).
 \label{bmfc:eq:pure-payoff}
\end{equation}
Indeed, the expected payoff at stage $t$ equals
\[
 \sum_{s\le t}p_s(a)\mathbf1_{\{a_s=L\}}
 +
 \left(1-\sum_{s\le t}p_s(a)\right)
       \mathbf1_{\{a_t=R\}},
\]
and its difference from
$A_1(a)+S(a)\mathbf1_{\{a_t=R\}}$
tends to zero as $t\to\infty$.
Since $\lambda$ is diffuse, the sequence of these
differences has integral zero with respect to $\lambda$,
which proves \eqref{bmfc:eq:pure-payoff}.

\medskip
\noindent\textbf{Step 1: Payoffs against a stationary strategy.}
Fix $p\in(0,1)$, and let $\mathbb P=P_p$ be the product probability on $\{L,R\}^{\mathbb N}$ under which each coordinate equals $L$ with probability $p$. Write $\mathbb E_p$ for expectation with respect to
$\mathbb P$.

Under $P_p$, the conditional probability of $a_t=L$, given $a_1,\ldots,a_{t-1}$, is $p$. Since $p_t(a)$ depends only on these past coordinates,
\[
 \mathbb E_p\!\left[ p_t(a)\mathbf1_{\{a_t=L\}}
   \,\middle|\,a_1,\ldots,a_{t-1}  \right]
 = p\,p_t(a).
\]
Taking expectations and applying the monotone convergence
theorem gives
\[
 \begin{aligned}
 \mathbb E_p A_1
 &= \sum_{t\ge1}\mathbb E_p \bigl[p_t(a)\mathbf1_{\{a_t=L\}}\bigr]\\
 &= p\,\mathbb E_p\sum_{t\ge1}p_t(a) = p(1-\mathbb E_p S).
 \end{aligned}
\]
It follows that
\[
 \mathbb E_p[A_1+pS]=p.
\]

\medskip
\noindent\textbf{Step 2: Long strings of $L$.}
Fix $N\in\mathbb N$. The events
\[
 \{a_{mN+1}=\cdots=a_{(m+1)N}=L\},
 \qquad m=0,1,2,\ldots,
\]
are independent under $\mathbb P$, and each has probability $p^N>0$. By the second Borel--Cantelli lemma, $\mathbb P$-almost surely infinitely many of them occur.

Taking the countable intersection over $N$, we conclude that, with $\mathbb P$-probability one, for every $N\in\mathbb N$ there is a window
$W_N=\{k_N,\ldots,k_N+N-1\}$ such that $a_t=L$
for all $t\in W_N$.

\medskip
\noindent\textbf{Step 3: Choice of $\tau$ and $\mu$.}
Let $\delta>0$. Choose $p\in(0,1)$ with $2p\le\delta$ and apply Steps~1 and~2 with $\mathbb P=P_p$. The equality $\mathbb E_p[A_1+pS]=p$ and the
nonnegativity of $S$ imply that $\mathbb E_p A_1\le p$. Therefore, by Markov's inequality, the event $\{a:A_1(a)<\delta\}$ has $\mathbb P$-probability at least $1/2$.

By Step~2, the event that, for every $N$, there is a window of length $N$ on which $a_t=L$ has $\mathbb P$-probability one. Hence the intersection
of these two events has positive probability.
Choose a sequence $a$ in this intersection and, for each $N$, a corresponding window $W_N$.
Let $\tau$ be the pure strategy that plays $a_t$
at stage $t$ before absorption.

Since $\mathbf1_{R(a)}$ vanishes on every $W_N$, Lemma~\ref{lem:windows}, applied with $I=\mathbb N$ and these windows, gives a translation-invariant charge $\mu$ such that $\mu(R(a))=0$.
As $\mu$ is diffuse, \eqref{bmfc:eq:pure-payoff} gives
\[
 \mu(\sigma,\tau)
   =A_1(a)+S(a)\mu(R(a))
   =A_1(a)<\delta.
\]
This proves the proposition.
\end{proof}

\section{A fixed translation-invariant charge}
\label{sec:bm-fixed-charge}

\begin{theorem}
\label{thm:bm-main}
There exist translation-invariant charges $\rho$ and $\nu$,
concentrated on disjoint subsets of $\mathbb N$, such that,
for every $\alpha\in[0,1]$, the charge
\[
 \mu_\alpha=\alpha\rho+(1-\alpha)\nu
\]
satisfies
\begin{equation}
 \underline v(\mu_\alpha)
 =
 \sup_\sigma\inf_{\tau\ \mathrm{pure}}
       \mu_\alpha(\sigma,\tau)
 =
 \frac{\alpha}{1+\alpha}.
 \label{bmfc:eq:values}
\end{equation}
More precisely, for every strategy $\sigma$ of Player~1
and every $\alpha\in(0,1]$, there is a pure strategy $\tau$
of Player~2 such that
\[
 \mu_\alpha(\sigma,\tau)
 \le\frac{\alpha}{1+\alpha}.
\]
For $\alpha=0$, for every $\sigma$ and every
$\varepsilon>0$, there is a pure $\tau$ such that
$\nu(\sigma,\tau)<\varepsilon$.

Consequently, by Lemma~\ref{bmfc:lem:upper}, the Big Match has no value under $\mu_\alpha$ for $\alpha<1$, and its lower value ranges over $[0,1/2]$ as the translation-invariant charge varies.
\end{theorem}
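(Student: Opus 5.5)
The plan has two halves: a lower bound for Player~1 and an upper bound for Player~2. Both rest on the pure-strategy formula \eqref{bmfc:eq:pure-payoff}, which holds for every diffuse charge and hence for $\mu_\alpha$:
\[
\mu_\alpha(\sigma,a)=A_1(a)+S(a)\bigl(\alpha\rho(R(a))+(1-\alpha)\nu(R(a))\bigr).
\]
I would take $\rho$ Ces\`aro-like, namely the interval-average charge with $J_n=\{1,\dots,n\}$ restricted to an ultrafilter, so that $\rho$ vanishes on every set of natural density zero. I would take $\nu$ concentrated on a set $E$ of density zero built from long blocks, defined by an iterated ultrafilter limit $\nu=\lim_{k\to\mathcal U}\lim_{n\to\mathcal U}\lambda_{k,n}$. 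Here $\lambda_{k,n}$ is the uniform average over a window $W_{k,n}\subseteq E$ of length $\ell_k\to\infty$. The windows lie at rapidly increasing positions, and for fixed $k$ there are infinitely many of them. Disjointness holds because $\rho(E)=0$ and $\nu(E)=1$, and translation invariance follows from the interval-average and iterated-limit remarks in Section~\ref{bm:sec:charges-ultrafilters}.

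\textbf{Upper bound (Player~2).} Fix $\sigma$ and set $p=\alpha/(1+\alpha)$. I would let Player~2 randomize as in Proposition~\ref{prop:bm}: off $E$ she plays i.i.d.\ with $\Pr(L)=p$. On $E$ she plays $L$ on a random selection of blocks. At each level $k$ a block is made all-$L$ independently with a small probability $q_k$, and otherwise the coordinates are i.i.d.\ with probability $p$ of $L$. The target is that almost surely:
\begin{itemize}
\item $\rho(R(a))=1-p$, by the strong law of large numbers off the density-zero set $E$;
\item $\nu(R(a))=0$.
\end{itemize}
The second condition requires that, for each level $k$, the all-$L$ blocks are $\mathcal U$-large in $n$. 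This is what the construction of the $W_{k,n}$ must ensure. I would try to get it by letting each $W_{k,n}$ be an average over many candidate sub-blocks: the charge then sees the proportion of all-$L$ sub-blocks, and this proportion can be pushed to $1$ along the iterated limit by Borel--Cantelli. The martingale computation of Step~1 extends to any Player~2 strategy whose conditional probability of $L$ is $p$ off $E$. If Player~1's absorption probability on $E$ can be controlled, this yields $\mathbb E[A_1+pS]\le p+o(1)$. Since $\alpha(1-p)=p$, we then get $\mathbb E\,\mu_\alpha(\sigma,a)\le p+o(1)$, and a pure realization $a$ with $\mu_\alpha(\sigma,a)\le p$ is selected as in Step~3 of Proposition~\ref{prop:bm}. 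For $\alpha=0$ we take $p$ small, exactly as in that proposition.

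\textbf{Lower bound (Player~1).} I would use a Blackwell--Ferguson type strategy with parameter tuned to $p$. Player~1 plays $B$ with probability depending on the running excess $\#R-\#L$ measured only on stages outside $E$, and never plays $B$ on $E$. The classical estimate gives $A_1+S\,\rho(R)\ge\tfrac12-\varepsilon$ for the Ces\`aro-like $\rho$. A reweighted version, with the walk counting $R$ with weight $p$ and $L$ with weight $1-p$, should give $A_1+S\alpha\rho(R)\ge\alpha/(1+\alpha)-\varepsilon$ against every pure $a$. Then
\[
\mu_\alpha(\sigma,a)\ge A_1+S\alpha\rho(R)\ge\frac{\alpha}{1+\alpha}-\varepsilon,
\]
because $\nu(R)\ge 0$.

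\textbf{Main obstacle.} The hard step is designing $\nu$. It is fixed in advance, yet Player~2 must be able to make $\nu(R(a))=0$ while Player~1, who knows $E$, gains little by absorbing on $E$. If Player~2 simply played $L$ throughout $E$, Player~1 would play $B$ at the start of $E$ and receive $1$. The construction must therefore hide the all-$L$ stretches among many candidate locations, with Player~1's absorption per candidate bounded by roughly $q_k$, while the nested limits still register only the all-$L$ ones. This is where I expect most of the work, and the first thing I would try is the many-sub-block averaging with nested ultrafilter limits described above.
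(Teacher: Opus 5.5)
\textbf{The upper bound has a genuine gap.} It sits exactly at the step you flag as the main obstacle, and the mechanism you propose there cannot work.

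\emph{Averaging over sub-blocks fails.} Suppose that at level $k$ each candidate sub-block is made all-$L$ independently with small probability $q_k$, and is otherwise i.i.d. Then by the law of large numbers, the fraction of $R$ in a long window made of many such sub-blocks is almost surely close to $(1-q_k)(1-p)$. Any charge built from these window averages therefore gives $\nu(R(a))\approx(1-q_k)(1-p)$, not $0$. Borel--Cantelli only shows that all-$L$ blocks occur infinitely often. That was enough in Proposition~\ref{prop:bm} only because there the charge was built after $a$. Driving the proportion of all-$L$ sub-blocks to $1$ forces $q_k\to1$. Then Player~1 absorbs at payoff $1$ with probability close to $1$ by playing $B$ at a candidate start.

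\emph{Letting the ultrafilter pick the blocks does not give ``almost surely'' either.} Take independent block types. Apply Lemma~\ref{bmfc:lem:ultrafilter} both to the letter ``all-$L$'' and to the letter ``not all-$L$''. It shows that the event $\{n:\text{block }n\text{ is all-}L\}\in\mathcal U$ and its complement both have outer probability one. So this event is non-measurable with inner probability zero.

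\textbf{The missing idea is to exploit this non-measurability rather than fight it.} The paper takes $\nu$ to be the iterated ultrafilter limit (first in $n$, then in $k$) of uniform averages over single blocks $I_{k,n}$ of length $k$. Player~2 plays i.i.d.\ $P_p$ \emph{everywhere}, including on their union $D$. Then $\mathbb E_p[A_1+pS]=p$ holds exactly, and absorption on $D$ needs no separate control.

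By Lemma~\ref{bmfc:lem:ultrafilter} applied blockwise and Lemma~\ref{bmfc:lem:product}, the set $X$ of sequences whose all-$L$ blocks are $\mathcal U$-large at every level has outer $P_p$-probability one. Hence $X$ meets the Borel set of positive probability on which $A_1+pS\le p$ and the $L$-frequency is $p$. Any $a$ in this intersection has $\nu(R(a))=0$ and $\rho(R(a))=1-p$. With $p=\alpha/(1+\alpha)$ this gives $\mu_\alpha(\sigma,a)=A_1+pS\le p$. The pure reply comes from a non-constructive selection, not from a typical realization.

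\textbf{A smaller issue in the lower bound.} Your inequality $A_1+S\alpha\rho(R)\ge c-\varepsilon$ is stated only for pure $a$. But $\underline v(\mu_\alpha)$ takes the infimum over all $\tau$, and a finitely additive charge need not commute with Player~2's mixing; the construction above is a stark example. To fix this:
\begin{enumerate}
\item State your reweighted Blackwell--Ferguson guarantee in uniform finite-horizon form, so that it holds against every $\tau$.
\item Combine it with $g_t\ge\pi_t$, where $\pi_t$ is the probability of absorption at payoff $1$ by stage $t$. This gives $\mu_\alpha(g)\ge\rho(\alpha g+(1-\alpha)\pi)$.
\end{enumerate}
This is the paper's argument. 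It invokes Kohlberg's theorem for the modified game with payoff $\alpha$ at $(T,R)$, whose value is $\alpha/(1+\alpha)$.
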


\subsection*{Two facts about outer probability}

A set has \emph{outer probability one} under a Borel
probability $P$ if it meets every Borel set of positive
$P$-probability. The set itself need not be measurable.

Recall that membership in a nonprincipal ultrafilter
$\mathcal U$ on $\mathbb N$ is unchanged by finite
modifications. Moreover, if finitely many sets cover
$\mathbb N$, at least one of them belongs to $\mathcal U$.

The following lemma extends the classical outer-measure
property of nonprincipal ultrafilters from the fair-coin
product measure to arbitrary probabilities on a finite
alphabet; see \cite[464Ca]{Fremlin} for the classical
statement. We include a proof for completeness.

\begin{lemma}
\label{bmfc:lem:ultrafilter}
Let $F$ be a finite set, let $\pi$ be a probability on $F$,
and let $b\in F$ satisfy $0<\pi(b)<1$.
If $\mathcal U$ is a nonprincipal ultrafilter on
$\mathbb N$, then
\[
 Z_b=\left\{x\in F^{\mathbb N}:
             \{n:x_n=b\}\in\mathcal U\right\}
\]
has outer probability one under
$\pi^{\otimes\mathbb N}$.
\end{lemma}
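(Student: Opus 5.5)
We must show that every Borel set $B\subseteq F^{\mathbb N}$ with $P(B)>0$, where $P=\pi^{\otimes\mathbb N}$, meets $Z_b$. We argue by contradiction: suppose $P(B)>0$ and $B\cap Z_b=\emptyset$. Since $B$ is disjoint from $Z_b$, every $x\in B$ satisfies $\{n:x_n=b\}\notin\mathcal U$.

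The plan is to approximate $B$ by a cylinder set and then use a measure-preserving coordinatewise relabelling. Such a relabelling swaps the role of $\{n:x_n=b\}$ with that of a complementary-type set, so that the ultrafilter must accept one of them.

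\textbf{Step 1 (density point).} By the Lebesgue density theorem for product measures (martingale convergence of $P(B\mid x_1,\dots,x_k)$), there are $k$ and a word $w\in F^k$ such that the cylinder $[w]$ satisfies $P(B\cap[w])>(1-\eta)P([w])$, with $\eta$ small. For instance $\eta<1/4$ will do; the exact requirement is fixed in Step~3. Because membership in $\mathcal U$ is unchanged by finite modifications, we may work only with coordinates $n>k$. Write $B_w=\{y\in F^{\{k+1,\dots\}}:wy\in B\}$. Then $P'(B_w)>1-\eta$, where $P'$ is the product measure on the tail coordinates, and for every $y\in B_w$ we have $\{n>k:y_n=b\}\notin\mathcal U$.

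\textbf{Step 2 (the swap).} Choose a second symbol $c\neq b$ with $\pi(c)>0$; it exists because $\pi(b)<1$. The obstacle is that $\pi(b)$ and $\pi(c)$ may differ, so the naive swap $b\leftrightarrow c$ does not preserve $P$. To handle this, we use a measure-preserving randomized recoding on each coordinate. The simplest version realises each coordinate as a function of a uniform variable $U_n\in[0,1]$, which gives $x_n=\phi(U_n)$ with $\{\phi=b\}=[0,\pi(b))$. We then use the measure-preserving map $U\mapsto 1-U$ together with shifts by rationals to produce finitely many maps $T_1,\dots,T_m$ on $[0,1]$. Each $T_j$ preserves Lebesgue measure, and the sets $T_j^{-1}[0,\pi(b))$ cover $[0,1]$; rotations by multiples of $\pi(b)$, modulo 1, work once $m\ge 1/\pi(b)$. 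Lift everything to $\Omega=[0,1]^{\mathbb N}$ with Lebesgue product measure $Q$, and let $\tilde B=\{U:\phi(U)\in B_w\}$, so that $Q(\tilde B)>1-\eta$. Apply $T_j$ coordinatewise and set $\tilde B_j=T_j^{-1}\tilde B$. Then $Q(\tilde B_j)>1-\eta$ for every $j$.

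\textbf{Step 3 (contradiction).} Take $\eta<1/m$. Then $\bigcap_j\tilde B_j$ has positive measure, so there is some $U$ in it. For each $j$, the point $T_jU$ lies in $\tilde B$, hence the set $E_j=\{n:\phi(T_jU_n)=b\}=\{n:U_n\in T_j^{-1}[0,\pi(b))\}$ is not in $\mathcal U$. But the sets $E_1,\dots,E_m$ cover $\{n>k\}$, so one of them must belong to $\mathcal U$, which is a contradiction.

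The main obstacle is Step 2: producing finitely many measure-preserving transformations that turn the event "$x_n=b$" into a finite cover of all coordinates when $\pi(b)\ne1/2$. The rotation trick on $[0,1]$ resolves this. The remaining care is to ensure the lifted set $\tilde B$ is Borel, which holds because $\phi$ is Borel, and that the density approximation in Step 1 is done with $\eta<1/m$.
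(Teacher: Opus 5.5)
Your proof is correct. Its core is the same as the paper's: realize each coordinate as $\phi(U_n)$ with $\{\phi=b\}=[0,\pi(b))$, then use $m\ge 1/\pi(b)$ measure-preserving rotations of $U_n$ so that the $m$ sets $\{n:\phi(T_jU_n)=b\}$ cover all indices, which contradicts the ultrafilter property. The paper rotates by $(j-1)/r$. Your rotations by multiples of $\pi(b)$ work equally well, because the arcs $T_j^{-1}[0,\pi(b))$ join into a single arc of length $m\pi(b)\ge 1$.

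The two proofs differ in how they make all rotated copies land in the bad set at once.
\begin{itemize}
\item The paper shrinks a non-null Borel set $B\subseteq Z_b^c$ to a compact $K$ and closes it under finite modifications. The result $H$ is $\sigma$-compact, hence Borel, still lies inside $Z_b^c$, and is a tail event. Kolmogorov's zero--one law then gives $P(H)=1$, so all $r$ rotated sequences lie in $H$ almost surely.
\item You localize instead. A density point (L\'evy's upward theorem) gives a cylinder $[w]$ on which $B$ has relative measure $>1-\eta$. Invariance of $\mathcal U$ under finite modifications lets you drop the first $k$ coordinates. The union bound with $\eta<1/m$ then makes $\bigcap_j T_j^{-1}\tilde B$ non-null.
\end{itemize}
Your route avoids inner regularity, the measurability of the finite-modification saturation, and the zero--one law, at the cost of a quantitative estimate. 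The paper's route is qualitative: it first shows that the bad set would have to be almost sure.

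A few cosmetic points:
\begin{itemize}
\item Your parenthetical claim that $\eta<1/4$ suffices is false in general. You do correctly impose $\eta<1/m$ in Step~3.
\item The auxiliary symbol $c$ and the reflection $U\mapsto 1-U$ are never used.
\item You should work on $[0,1)$, or discard the null set where some $U_n=1$, so that the covering is exact.
\end{itemize}
Neither argument actually needs the hypothesis $\pi(b)<1$.
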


\begin{proof}
Suppose that $Z_b$ does not have outer probability one. Then $Z_b^c$ contains a Borel set of positive probability and hence, by inner regularity, a compact set $K$ of positive probability.

Let $H$ consist of all finite-coordinate modifications of points of $K$. For each $m$, the set of sequences that agree with some point of $K$ at every coordinate $n>m$ is compact: it is the product of $F^m$ and the
projection of $K$ onto the remaining coordinates.
The union of these sets over $m$ is $H$, so $H$ is Borel.
Since membership in $\mathcal U$ is unchanged by finite modifications, $H\subseteq Z_b^c$.
Moreover, membership in $H$ does not depend on any finite initial segment, so $H$ is a tail event. It has positive probability and therefore has probability one by Kolmogorov's zero-one law.

Put $p=\pi(b)$ and choose an integer $r$ with $rp\ge1$.
Partition $[0,1)$ into half-open intervals $(J_a)_{a\in F}$
of lengths $\pi(a)$, allowing empty intervals, with
$J_b=[0,p)$. Define $f(u)=a$ for $u\in J_a$.
Let $(Z_n)$ be independent uniform random variables on
$[0,1)$, and define
\[
 Y_n^{(j)}
 =f\left(\left(Z_n+\frac{j-1}{r}\right)\bmod1\right),
 \qquad 1\le j\le r.
\]
Each sequence $Y^{(j)}$ has distribution $\pi^{\otimes\mathbb N}$.

Since $p\ge1/r$, for every $n$ at least one of the $r$ equally spaced points belongs to $[0,1/r)\subseteq[0,p)$, so
\[
 \bigcup_{j=1}^r\{n:Y_n^{(j)}=b\}=\mathbb N.
\]
Almost surely all the sequences $Y^{(j)}$ belong to $H$,
and hence none of the sets in this finite union belongs
to $\mathcal U$. This contradicts the ultrafilter property.
\end{proof}

We shall also use the following standard property of
product measures; see \cite[Theorem~254Lb]{Fremlin2}.

\begin{lemma}
\label{bmfc:lem:product}
For each $j\ge1$, let $P_j$ be a Borel probability on a
compact metric space $\Omega_j$, and let
$X_j\subseteq\Omega_j$ have outer $P_j$-probability one.
Then $\prod_{j\ge1}X_j$ has outer probability one under
$\bigotimes_{j\ge1}P_j$.
\end{lemma}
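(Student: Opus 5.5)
We argue by contradiction and build a point of $\prod_j X_j$ inside a given Borel set of positive measure, one coordinate at a time. Write $\Omega=\prod_{j\ge1}\Omega_j$, which is a compact metric space, and $P=\bigotimes_j P_j$. Suppose $B\subseteq\Omega$ is Borel with $P(B)>0$. We must find a point of $B$ in $\prod_j X_j$. By inner regularity of Borel probabilities on compact metric spaces, we first replace $B$ by a compact $K\subseteq B$ with $P(K)>0$. Compactness is what lets us pass from finitely many chosen coordinates to the infinite sequence at the end.

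For $n\ge0$ and $(x_1,\dots,x_n)\in\Omega_1\times\cdots\times\Omega_n$, let $K_{x_1\dots x_n}\subseteq\prod_{j>n}\Omega_j$ be the section of $K$. Set
\[
\phi_n(x_1,\dots,x_n)=\Bigl(\textstyle\bigotimes_{j>n}P_j\Bigr)\bigl(K_{x_1\dots x_n}\bigr).
\]
By Fubini's theorem, applied to the decomposition $\Omega=\Omega_{1}\times\cdots\times\Omega_n\times\prod_{j>n}\Omega_j$, each $\phi_n$ is Borel. Fubini also gives
\[
\phi_{n-1}(x_1,\dots,x_{n-1})=\int_{\Omega_n}\phi_n(x_1,\dots,x_{n-1},y)\,dP_n(y).
\]
We choose $x_1,x_2,\dots$ recursively so that $x_n\in X_n$ and $\phi_n(x_1,\dots,x_n)>0$; the base case is $\phi_0=P(K)>0$.

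For the inductive step, suppose $\phi_{n-1}(x_1,\dots,x_{n-1})>0$. By the integral identity, the Borel set
\[
A_n=\{y\in\Omega_n:\phi_n(x_1,\dots,x_{n-1},y)>0\}
\]
has positive $P_n$-probability. Since $X_n$ has outer $P_n$-probability one, $X_n$ meets $A_n$, and we pick $x_n\in X_n\cap A_n$.

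It remains to show that $x=(x_1,x_2,\dots)$ lies in $K$; this is the only delicate point, since positivity of every section does not by itself force a Borel set to contain the limit point. For each $n$, the section $K_{x_1\dots x_n}$ has positive measure and is therefore nonempty. So there is $z^{(n)}\in K$ agreeing with $x$ in the first $n$ coordinates. In the product topology $z^{(n)}\to x$, and $K$ is closed, so $x\in K\subseteq B$. Since also $x\in\prod_jX_j$, the set $\prod_jX_j$ meets $B$. As $B$ was an arbitrary Borel set of positive probability, $\prod_jX_j$ has outer probability one.
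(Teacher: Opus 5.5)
Your proof is correct, but it takes a different route from the paper, which gives no proof and simply cites Fremlin (Theorem~254Lb). There the statement is a purely measure-theoretic fact, valid for arbitrary probability spaces and arbitrary index sets.

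The usual argument behind that citation constructs no point. Because $X_j$ meets every set of positive measure, $E\cap X_j\mapsto P_j(E)$ is a well-defined probability $Q_j$ on the trace $\sigma$-algebra of $X_j$. The product $Q=\bigotimes_jQ_j$ lives on the trace of the product $\sigma$-algebra on $\prod_jX_j$. The set function $W\mapsto Q\bigl(W\cap\prod_jX_j\bigr)$ agrees with $\bigotimes_jP_j$ on cylinders, hence on all measurable sets. So every measurable set of positive probability meets $\prod_jX_j$.

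You instead build such a point inside a given $B$ by a Fubini disintegration, choosing one coordinate at a time. Your argument uses the topology essentially, in two places: inner regularity to pass to a compact $K\subseteq B$, and closedness of $K$ to put the limit point in $K$. You rightly flag the second as the delicate step, since for a general Borel $B$ positivity of all finite sections does not force the limit into $B$.

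So your argument is elementary and self-contained, but it is tied to countable products of compact metric spaces. That is exactly the lemma's hypothesis and the setting of its use for the factors $\{L,R\}^{D_k}$ and $\{L,R\}^{\mathbb N\setminus D}$. The cited route needs neither topology nor a recursive choice of points, and it covers uncountable products as well.
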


\subsection*{Proof of the theorem}

\begin{proof}
\emph{Construction of the charges.}
Fix a nonprincipal ultrafilter $\mathcal U$.
Recall from Section~\ref{bm:sec:charges-ultrafilters}
that ultrafilter limits along $\mathcal U$ are linear and
monotone, and that a nonprincipal ultrafilter limit agrees
with ordinary limits whenever the latter exist.

If $(\lambda_n)$ is a sequence of charges, then the
pointwise ultrafilter limit
\[
 \lambda(E)=\lim_{n\to\mathcal U}\lambda_n(E),
 \qquad E\subseteq\mathbb N,
\]
is a charge, and for every bounded sequence $g$,
\[
 \lambda(g)=\lim_{n\to\mathcal U}\lambda_n(g).
\]
The same facts apply successively to iterated ultrafilter
limits, taken in the specified order.

For $1\le k\le n$, put
\[
 I_{k,n}
 =
 \left\{
 n^4+k(k-1)+1,\ldots,
 n^4+k^2
 \right\},
 \qquad
 D=\bigcup_{n\ge1}\bigcup_{k=1}^n I_{k,n}.
\]
These intervals are pairwise disjoint and have
$|I_{k,n}|=k$. For $m^4\le N<(m+1)^4$,
\[
 \frac{|D\cap\{1,\ldots,N\}|}{N}
 \le
 \frac{1}{m^4}\sum_{n=1}^m\sum_{k=1}^n k
 =
 \frac{m(m+1)(m+2)}{6m^4}
 \longrightarrow 0.
\]
Thus $D$ has density zero.

For every $E\subseteq\mathbb N$, define
\begin{equation}
 \begin{split}
 \nu_k(E)
   &=\lim_{\substack{n\to\mathcal U\\ n\ge k}}
         \frac{|E\cap I_{k,n}|}{k},
 \qquad
 \nu(E)=\lim_{k\to\mathcal U}\nu_k(E),\\
 \rho(E)
   &=\lim_{N\to\mathcal U}
         \frac{|E\cap\{1,\ldots,N\}|}{N}.
 \end{split}
 \label{bmfc:eq:charges}
\end{equation}
By the facts recalled above, $\nu_k$ and $\rho$ are charges,
and $\nu$, being an ultrafilter limit of the charges $\nu_k$,
is also a charge. In the definition of $\nu_k$, the restriction
$n\ge k$ deletes only finitely many indices; since $\mathcal U$
is nonprincipal, it does not affect the ultrafilter limit.

Since $I_{k,n}$ is an interval, for every $1\le k\le n$
and $E\subseteq\mathbb N$,
\[
 -1
 \le |(E+1)\cap I_{k,n}|-|E\cap I_{k,n}|
 \le 1.
\]
Dividing by $k$ and taking the ultrafilter limit in $n$ gives
\[
 |\nu_k(E+1)-\nu_k(E)|\le\frac1k.
\]
Since $1/k\to0$ as $k\to\infty$, taking the ultrafilter limit in $k$ gives
\[
 \nu(E+1)
 =\lim_{k\to\mathcal U}\nu_k(E+1)
 =\lim_{k\to\mathcal U}\nu_k(E)
 =\nu(E).
\]
Thus $\nu$ is translation-invariant. Similarly, since $I_N=\{1,\ldots,N\}$ is an interval, for every $N\ge1$ and $E\subseteq\mathbb N$,
\[
 -1
 \le |(E+1)\cap I_N|-|E\cap I_N|
 \le 1.
\]
Dividing by $N$ and taking the ultrafilter limit in $N$ gives $\rho(E+1)=\rho(E)$. Thus $\rho$ is translation-invariant. In addition, since $I_{k,n}\subseteq D$ for every $n\ge k$, we have $\nu_k(D)=1$ for every $k$, and hence $\nu(D)=1$. Since $D$ has density zero,
\[
 \rho(\mathbb N\setminus D)=1.
\]
Thus $\nu$ and $\rho$ are concentrated on disjoint subsets of $\mathbb N$.

The definition of $\rho$ also implies that it agrees
with ordinary Ces\`aro limits whenever they exist.

The same charges $\rho$ and $\nu$ will be used for all
strategy pairs and all $\alpha\in[0,1]$.

\medskip
\noindent\emph{A common set of pure sequences.}
Define $(X_k)_{k\ge1}$ and $X$ by
\begin{align}
 X_k  &=
 \left\{a\in\{L,R\}^{\mathbb N}:
       \{n\ge k:a_t=L\text{ for every }t\in I_{k,n}\}
          \in\mathcal U\right\},
 \notag\\
 X&=\bigcap_{k\ge1}X_k.
 \label{bmfc:eq:common-set}
\end{align}

For each $k\ge1$, put
\[
 D_k=\bigcup_{n\ge k}I_{k,n}.
\]
The condition defining $X_k$ depends only on
the coordinates indexed by $D_k$.

Recall that, for $0<p<1$, $P_p$ is the product probability
on $\{L,R\}^{\mathbb N}$ under which each coordinate equals
$L$ with probability $p$. We show that $X$ has outer $P_p$-probability one for every $p\in(0,1)$.

Under $P_p$, the blocks $(a_t)_{t\in I_{k,n}}$, $n\ge k$,
are independent and identically distributed on $\{L,R\}^k$, and the all-$L$ block has probability $p^k\in(0,1)$.
Since deleting the finitely many indices $n<k$ does not
affect membership in $\mathcal U$,
Lemma~\ref{bmfc:lem:ultrafilter} applies to these blocks.
It follows that the set of sequences
$a\in\{L,R\}^{D_k}$ for which
\[
 \{n\ge k:a_t=L\text{ for every }t\in I_{k,n}\} \in\mathcal U
\]
has outer probability one under the marginal of $P_p$
on $\{L,R\}^{D_k}$.

The sets $D_k$, $k\ge1$, are pairwise disjoint and their
union is $D$.
Applying Lemma~\ref{bmfc:lem:product} to the factors
$\{L,R\}^{D_k}$, together with the full factor
$\{L,R\}^{\mathbb N\setminus D}$, shows that $X$ has
outer $P_p$-probability one.

Write $L(a)=\{t:a_t=L\}$ and
$R(a)=\mathbb N\setminus L(a)$. Hence
\begin{align}
 a\in X
 &\quad\Longrightarrow\quad
 \nu_k(L(a))=1\quad\text{for every }k\ge1
 \notag\\
 &\quad\Longrightarrow\quad
 \nu(L(a))=1,\qquad \nu(R(a))=0.
 \label{bmfc:eq:common-mass}
\end{align}

Recall the notation $A_1(a)$ and $S(a)$ from
Proposition~\ref{prop:bm}.
The Borel set $\{a:A_1(a)+pS(a)\le p\}$ has positive $P_p$-probability. By the strong law of large numbers, the set of sequences with ordinary $L$-frequency $p$ has $P_p$-probability one. The intersection of these two
Borel sets therefore has positive $P_p$-probability and meets $X$, since $X$ has outer $P_p$-probability one.

We have thus proved that, for every $\sigma$ and every $0<p<1$, there is a pure sequence $a\in X\cap\{a:A_1(a)+pS(a)\le p\}$ whose ordinary
$L$-frequency is $p$. Since $\rho$ agrees with ordinary Ces\`aro limits and $a\in X$, it follows that
\begin{equation}
 \rho(R(a))=1-p,\qquad
 \nu(R(a))=0,\qquad
 A_1(a)+pS(a)\le p.
 \label{bmfc:eq:selection}
\end{equation}

\medskip
\noindent\emph{Upper bound on the lower value.}
For $0<\alpha\le1$, set
\[
 c=\frac{\alpha}{1+\alpha}.
\]
Apply \eqref{bmfc:eq:selection} with $p=c$.
Since $\alpha(1-c)=c$, the selected pure sequence
satisfies
\[
 \mu_\alpha(R(a))
 =\alpha\rho(R(a))+(1-\alpha)\nu(R(a))
 =\alpha(1-c)=c.
\]
Equations \eqref{bmfc:eq:pure-payoff} and
\eqref{bmfc:eq:selection} give
\[
 \mu_\alpha(\sigma,a)
 =\alpha\rho(\sigma,a)+(1-\alpha)\nu(\sigma,a)
 =A_1(a)+cS(a)\le c.
\]

The assertion for $\alpha=0$ follows by continuity. Indeed,
\[
 |\mu_\alpha(\sigma,a)-\nu(\sigma,a)|
 =\alpha|\rho(\sigma,a)-\nu(\sigma,a)|
 \le\alpha
\]
uniformly in $\sigma$ and $a$. Hence, for every fixed
$\sigma$,
\[
 \inf_{a\ \mathrm{pure}}\nu(\sigma,a)
 =
 \lim_{\alpha\downarrow0}
 \inf_{a\ \mathrm{pure}}\mu_\alpha(\sigma,a).
\]
Together with nonnegativity and the bounds just proved,
the right-hand side is zero. Thus
\[
 \inf_{a\ \mathrm{pure}}\nu(\sigma,a)=0.
\]
This proves all the asserted pure-response bounds.

\medskip
\noindent\emph{Lower bound on the lower value.}
The case $\alpha=0$ follows from nonnegativity.
For $\alpha>0$, consider the modified Big Match
\[
 \begin{array}{c|cc}
       &L&R\\ \hline
 T     &0&\alpha\\
 B     &1^*&0^*
 \end{array}.
\]
Its normalized $\beta$-discounted value is
$c=\alpha/(1+\alpha)$ for every $\beta\in(0,1]$.
Indeed, the Shapley matrix at continuation value $c$ is
\[
 \begin{pmatrix}
 (1-\beta)c & \beta\alpha+(1-\beta)c\\
 1&0
 \end{pmatrix}.
\]
Player~2 assigns probability $c$ to $L$, and Player~1
assigns probability $\beta\alpha/(1+\beta\alpha)$ to $B$.
These mixed actions each yield $c$ against either
pure action of the opponent.
Thus the $\beta$-discounted value is $c$ for every
$\beta\in(0,1]$.
By Kohlberg's theorem for absorbing games~\cite{Koh74}, the game has value $c$; moreover, for every $\varepsilon>0$ there is a strategy of Player~1
whose expected average payoff is at least $c-\varepsilon$ against every strategy of Player~2 for all sufficiently long horizons.

For arbitrary strategies $(\sigma,\tau)$, let $g_t$ be the expected stage payoff in the original game, and let $A_t$ be the probability of absorption at payoff one by stage $t$.
Write $A_\infty=\lim_t A_t$.
Since $\nu$ and $\rho$ are diffuse, they agree with
ordinary limits on convergent sequences; hence
\[
 \nu(A)=\rho(A)=A_\infty.
\]
Since $g_t\ge A_t$, monotonicity gives $\nu(g)\ge A_\infty$.
Under the same strategies, the expected payoff in
the modified game is
\[
 \widetilde g_t=\alpha g_t+(1-\alpha)A_t.
\]
Consequently,
\begin{equation}
 \mu_\alpha(g)
  \ge\alpha\rho(g)+(1-\alpha)A_\infty
  =\rho(\widetilde g).
 \label{bmfc:eq:comparison}
\end{equation}

Choose the strategy supplied by Kohlberg's theorem for the modified game. Since $\rho$ is an ultrafilter limit of initial averages, this strategy satisfies
\[
 \rho(\widetilde g)\ge c-\varepsilon
\]
against every $\tau$.
Equation \eqref{bmfc:eq:comparison} then gives the
same guarantee under $\mu_\alpha$ in the original
game. Letting $\varepsilon$ tend to zero proves
$\underline v(\mu_\alpha)\ge c$.

Together with the pure-response bounds, this proves
\eqref{bmfc:eq:values}. Finally, for any $v\in[0,1/2]$, taking
$\alpha=v/(1-v)$ gives $\underline v(\mu_\alpha)=v$.

This completes the proof of Theorem~\ref{thm:bm-main}.
\end{proof}

\section{Stochastic games under translation-invariant charges}

We conclude with a research program concerning general stochastic
games under translation-invariant charges. We formulate it from
the point of view of Player~1; the corresponding statements for
Player~2 follow by duality.

Let $\mathcal I$ denote the set of translation-invariant charges
on $\mathbb N$. For an initial state $s$, define the \emph{robust
guarantee}
\[
 V^{\mathrm{rob}}(s)
 :=
 \sup_{\sigma}\inf_{\tau}\inf_{\mu\in\mathcal I}
       \mu(\sigma,\tau)(s),
\]
and the \emph{fixed-charge guarantee}
\[
 V^{\mathrm{fix}}(s)
 :=
 \inf_{\mu\in\mathcal I}
       \sup_{\sigma}\inf_{\tau}\mu(\sigma,\tau)(s).
\]
Thus, in the first expression Player~1 chooses a strategy before
the translation-invariant charge is selected, whereas in the
second expression the charge is fixed before Player~1 chooses
his strategy. Hence
\[
 V^{\mathrm{rob}}(s)\le V^{\mathrm{fix}}(s).
\]
Moreover, by Mertens and Neyman, the stochastic game with initial
state $s$ has value $v(s)$. In particular, $v(s)$ is the value
under every patient valuation. Since every patient valuation is
a translation-invariant charge,
\[
 V^{\mathrm{fix}}(s)\le v(s).
\]

For a stationary strategy $x$ of Player~1, let $w(x)(s)$ denote
the limiting-average value, with initial state $s$, of the MDP
faced by Player~2 when Player~1 uses $x$, with Player~2 minimizing.
By Remark~\ref{rem:stationary}, for every
$\mu\in\mathcal I$,
\[
 \inf_{\tau}\mu(x,\tau)(s)
 =
 \inf_{\tau}\inf_{\mu'\in\mathcal I}
       \mu'(x,\tau)(s)
 =
 w(x)(s).
\]
Thus the guarantee of a stationary strategy is independent of
the translation-invariant charge: its guarantee under each fixed
translation-invariant charge and its guarantee simultaneously over all
translation-invariant charges both equal $w(x)(s)$. Define
\[
 V^{\mathrm{stat}}(s):=\sup_x w(x)(s),
\]
where the supremum ranges over stationary strategies of Player~1.
Consequently,
\[
 V^{\mathrm{stat}}(s)\le V^{\mathrm{rob}}(s).
\]

We therefore have, for every initial state $s$,
\begin{equation}\label{eqn:inequalities-chain}
 V^{\mathrm{stat}}(s)
 \le V^{\mathrm{rob}}(s)
 \le V^{\mathrm{fix}}(s)
 \le v(s).
\end{equation}

For an MDP, viewed as a stochastic game with only one
nontrivial player, all these inequalities are equalities by
Theorem~\ref{thm:mdp}. In the Big Match, with its nonabsorbing
state as the initial state, Proposition~\ref{prop:bm} and
Theorem~\ref{thm:bm-main} show that
\[
 V^{\mathrm{stat}}
 =
 V^{\mathrm{rob}}
 =
 V^{\mathrm{fix}}
 =
 0
 <
 \frac12
 =
 v.
\]
The first objective is to characterize, for each of the
inequalities in \eqref{eqn:inequalities-chain}, the stochastic
games and initial states for which equality holds and those
for which the inequality is strict. 

For a fixed $\mu\in\mathcal I$, let
\[
 \underline v_\mu(s)
 :=
 \sup_\sigma\inf_\tau\mu(\sigma,\tau)(s),
\]
and
\[
 \underline{\mathcal V}(s)
 :=
 \{\underline v_\mu(s):\mu\in\mathcal I\}.
\]

A second, finer objective is to determine the set
$\underline{\mathcal V}(s)$ of lower values generated by
translation-invariant charges. In particular, when one or
more of the inequalities in \eqref{eqn:inequalities-chain}
are strict, which values between the corresponding benchmarks
belong to $\underline{\mathcal V}(s)$?

For the Big Match, Theorem~\ref{thm:bm-main} shows moreover that
\[
 \underline{\mathcal V}(s)=[0,1/2].
\]

\paragraph{Acknowledgment of AI assistance.}
The author used OpenAI's ChatGPT, including GPT-5.6, and Anthropic's Claude in the development of this paper. The assistance included discussion of mathematical arguments and possible extensions, checking proofs and counterexamples, and suggestions concerning exposition and organization. The author critically reviewed and independently verified the mathematical content and takes full responsibility for the paper. The assistance was sufficiently substantial that, had comparable contributions been made by a human collaborator, the author would have regarded them as warranting coauthorship.

\end{document}